\theoremstyle{definition}
  \newtheorem{thm}{Theorem}[section]
  \newtheorem{theorem}[thm]{Theorem}
  \newtheorem{definition}[thm]{Definition}
  \newtheorem{example}[thm]{Example}
  \newtheorem{corollary}[thm]{Corollary}
  \newtheorem{proposition}[thm]{Proposition}
  \newtheorem{lemma}[thm]{Lemma}
  \newtheorem{remark}[thm]{Remark}
  \newtheorem{claim}[thm]{Claim}
\title{Abelian Cycles in the Homology of the Torelli Group}
\author{Erik Lindell }
\date{}
\begin{document}

\maketitle
\begin{abstract}
    In the early 1980's, Johnson defined a homomorphism $\mathcal{I}_{g}^1\to\bigwedge^3 H_1(S_{g},\mathbb{Z})$, where $\mathcal{I}_{g}^1$ is the Torelli group of a closed, connected and oriented surface of genus $g$ with a boundary component and $S_g$ is the corresponding surface without a boundary component. This is known as the \textit{Johnson homomorphism}. 
    
    We study the map induced by the Johnson homomorphism on rational homology groups and apply it to abelian cycles determined by disjoint bounding pair maps, in order to compute a large quotient of $H_n(\mathcal{I}_{g}^1,\mathbb{Q})$ in the stable range. This also implies an analogous result for the stable rational homology of the Torelli group $\mathcal{I}_{g,1}$ of a surface with a marked point instead of a boundary component. Further, we investigate how much of the image of this map is generated by images of such cycles and use this to prove that in the pointed case, they generate a proper subrepresentation of $H_n(\mathcal{I}_{g,1})$ for $n\ge 2$ and $g$ large enough.
\end{abstract}

\parskip=2pt  

\tableofcontents

\parskip=10pt

\section{Introduction}

\subsection{The Torelli group} Let $S_{g,r}^{s}$ denote a connected and oriented surface of genus $g$, with $r$ marked points and $s$ boundary components. If either of $r$ or $s$ is zero we simply omit the corresponding index. Recall that the \textit{mapping class group} of $S_{g,r}^s$ is the group
$$\Gamma_{g,r}^s:=\pi_0\mathrm{Diff}^+(S_{g,r}^s)$$
of isotopy classes of orientation-preserving diffeomorphisms of $S_{g,r}^s$ that fix the marked points and boundary components. We let $H:=H_1(S_{g},\mathbb{Q})$ and $H_{\mathbb{Z}}:=H_1(S_{g},\mathbb{Z})$. 
Since $\mathrm{Diff}^+(S_{g,r}^s)$ acts on $S_g$, $\Gamma_{g,r}^s$ acts on $H$, which is a symplectic vector space. The $\Gamma_{g,r}^s$-action preserves the corresponding symplectic form 
$$\omega:=\sum_{i=1}^g a_i\wedge b_i\in\bigwedge ^2 H,$$
where $a_1,b_1$, $\ldots$, $a_{g},b_g$ is any symplectic basis of $H$, and this gives us a group homomorphism
$$\Gamma_{g,r}^s\to\mathrm{Sp}(H),$$
where $\mathrm{Sp}(H)$ denotes the symplectic group of $H$. We define the \textit{Torelli group} of $S_{g,r}^s$, denoted $\mathcal{I}_{g,r}^s$, to be the kernel of this homomorphism. The Torelli group is thus the subgroup of $\Gamma_{g,r}^s$ consisting of those isotopy classes of diffeomorphisms that act trivially on $H$. It is a classic result that the image of $\Gamma_{g,r}^s\to\mathrm{Sp}(H)$ is precisely the arithmetic subgroup $\mathrm{Sp}(H_{\mathbb
{Z}})$, so we get a short exact sequence
$$1\to\mathcal{I}_{g,r}^s\to\Gamma_{g,r}^s\to\mathrm{Sp}(H_{\mathbb{Z}})\to 1.$$
The conjugation action of $\Gamma_{g,r}^s$ on $\mathcal{I}_{g,r}^s$ gives us an action by $\mathrm{Sp}(H_{\mathbb{Z}})$ on $\mathcal{I}_{g,r}^s$ by outer automorphisms, which makes $H_n(\mathcal{I}_{g,r}^s,\mathbb{Q})$ into a $\mathrm{Sp}(H_{\mathbb{Z}})$-representation, for all $n\ge 0$. Unless explicitly stated, all homology is taken with rational coefficients from now on.

\subsection{The Johnson homomorphism} Many basic questions about the groups $H_n(\mathcal{I}_{g,r}^s)$ are unanswered for $n\ge 2$. By contrast, the case $n=1$ is completely understood, due to work by Johnson in the early 1980's \cite{Johnson-Survey}. Among other things, Johnson contructed a group homomorphism
$$\tau_J:\mathcal{I}_{g}^1\to\bigwedge^3 H_{\mathbb{Z}}$$
which is known as the \textit{Johnson homomorphism}. For $g\ge 3$, he proved that the induced $\mathrm{Sp}(H_{\mathbb{Z}})$-equivariant map
$$\tau:H_1(\mathcal{I}_{g}^1)\to \bigwedge^3 H$$
is an isomorphism.  In particular, this proves that $H_1(\mathcal{I}_{g}^1)$ is finite dimensional and algebraic as a representation of $\mathrm{Sp}(H_{\mathbb{Z}})$ for $g\ge 3$. Two important open questions about the higher homology groups are thus:

\textbf{Question 1.} Are the homology groups $H_n(\mathcal{I}_{g,r}^s)$ finite dimensional, for $n\ge 1$ and $g$ sufficiently large?

\textbf{Question 2.} Are the homology groups $H_n(\mathcal{I}_{g,r}^s)$ algebraic $\mathrm{Sp}(H_{\mathbb{Z}})$-representations, for $n\ge 1$ and $g$ sufficiently large?

In low genera, we know that the answer to both questions is generally \textit{no}. In genus 2, it was proven by Mess \cite{Mess-Torelli} that for a certain permutation matrix $e\in\mathrm{Sp}(2,\mathbb{Z})$ we have
$$H_1(\mathcal{I}_2)\cong\mathbb{Q}[\mathrm{Sp}(2,\mathbb{Z})/(\langle e\rangle\ltimes(\mathrm{SL}(2,\mathbb{Z})\times\mathrm{SL}(2,\mathbb{Z})))].$$  He also showed that $H_3(\mathcal{I}_3)$ contains a subrepresentation isomorphic to $\mathbb{Q}[\mathrm{Sp}(3,\mathbb{Z})/(\mathrm{Sp}(1,\mathbb{Z})\times\mathrm{Sp}(2,\mathbb{Z}))]$. It was recently proven by Gaifullin \cite{GaifullinTorelli} that for $g\ge 3$ and $2g-3\le n\le 3g-6$, $H_n(\mathcal{I}_g,\mathbb{Z})$ contains a free abelian subgroup of infinite rank. This means that $H_n(\mathcal{I}_g)$ is neither algebraic nor finite dimensional in either of these cases.

For $n\ge 1$, $\tau_J$ induces a map
$$(\tau_J)_*:H_n(\mathcal{I}_{g}^1)\to H_n\left(\bigwedge^3 H\right)\cong\bigwedge^n\left(\bigwedge^3 H\right),$$
which we will denote by $\psi_n$. An equivalent definition of $\psi_n$ is as the composition of the comultiplication $H_n(\mathcal{I}_{g}^1)\to\bigwedge^n H_1(\mathcal{I}_{g}^1)$ with $\wedge^n \tau$. For $n\ge 2$, it follows from results by Hain that the map $\psi_n$ is not surjective \cite{Hain-Infinitesimal}, but a natural question to ask is:

\textbf{Question 3.} Is $\psi_n$ injective for all $n\ge 1$ and $g$ sufficiently large?

It is not clear whether to expect this to be true. A positive answer would of course imply positive answers to Questions 1 and 2, in the corresponding case. It was proven by Kupers and Randal-Williams that the converse is also true \cite{Kupers-Torelli}. They also gave an explicit description of the kernel of the dual map of $\psi_n$, under these assumptions.

\begin{remark} In the case of a surface without a boundary component, Johnson instead defined a homomorphism $\mathcal{I}_g\to\left(\bigwedge^3 H_{\mathbb{Z}}\right)/H_{\mathbb{Z}},$ where $H_{\mathbb{Z}}$ is embedded as a subspace of $\bigwedge^3H_{\mathbb{Z}}$ by inserting the symplectic form. This is also known as the Johnson homomorphism. This homomorphism induces an $\mathrm{Sp}(H_{\mathbb{Z}})$-equivariant isomorphism $H_1(\mathcal{I}_g)\to \left(\bigwedge^3 H\right)/H$. \end{remark}

\begin{remark}Hain has studied the dual of the map $\psi_n$ in the unpointed case and determined its image completely for $n=2$ \autocite{Hain-Infinitesimal}. For $n=3$, its image was determined by Sakasai, up to one irreducible subrepresentation \autocite{Sakasai-Torelli}, the presence of which was settled by Kupers and Randal-Williams \cite{Kupers-Torelli}.

The Johnson homomorphisms are closely related to homomorphisms $\Gamma_{g,1}\to\frac{1}{2}\bigwedge^3 H_{\mathbb{Z}}\rtimes\mathrm{Sp}(H_{\mathbb{Z}})$ and $\Gamma_g\to\frac{1}{2}\left(\bigwedge^3 H_{\mathbb{Z}}\right)/H_{\mathbb{Z}}\rtimes \mathrm{Sp}(H_{\mathbb{Z}})$ defined by Morita \cite{MoritaExtension}, which were used by Kawazumi and Morita to give a description of the tautological subalgebra of $H^*(\Gamma_{g,1})$ in terms of trivalent graphs \cite{KawazumiPrimary}.
\end{remark}

We will study how the image of $\psi_n$ decomposes into irreducible $\mathrm{Sp}(H_{\mathbb{Z}})$-representations and determine a lower bound on this image in the stable range. The representation $\bigwedge^n(\bigwedge^3 H)$ is algebraic and this implies that its irreducible subrepresentations viewed as an $\mathrm{Sp}(H)$- and an $\mathrm{Sp}(H_{\mathbb{Z}})$-representation agree. Therefore, we will work with the image of $\psi_n$ as an $\mathrm{Sp}(H)$-representation. 

\subsection{Irreducible representations of symplectic groups}\label{secIrreps} In order to state our results, we need to recall some basics from the representation theory of symplectic groups. The isomorphisms classes of irreducible representations of $\mathrm{Sp}(H)$ are indexed by \textit{partitions}. A partition is a decreasing sequence $(\lambda_1\ge\lambda_2\ge\cdots\ge 0\ge 0\ge \cdots)$ of non-negative integers that eventually reaches zero. For example, the standard representation $H$ corresponds to the partition $(1\ge 0\ge 0\ge\cdots)$. We will write $(\lambda_1\ge\lambda_2\ge\cdots\ge\lambda_k)$ for the partition where all following entries in the sequence are zero. The sum $\sum_{n\ge 1}\lambda_n$ is called the \textit{weight} of the partition.  If $\lambda_{i-1}>\lambda_{i}=\cdots=\lambda_{i+l-1}>\lambda_{i+l}$, we will often write $(\lambda_1\ge\ldots>\lambda_i^l>\ldots\ge\lambda_k)$, for brevity, and use the convention that if $l=0$, then $\lambda_i$ does not occur in the partition.

We will write $V_\lambda=V_{\lambda_1,\ldots,\lambda_k}$ for the irreducible representation corresponding to the partition $\lambda=(\lambda_1\ge\lambda_2\ge\cdots\ge\lambda_k)$. In particular, we have $V_1\cong H$. We define the weight of $V_{\lambda}$ to be the weight of $\lambda$. For $k\ge 1$, the representation $H^{\otimes k}$ contains irreducible subrepresentations of weight at most $k$, which means that the top weight irreducible subrepresentations of $\bigwedge^n(\bigwedge^3 H)$ have weight $3n$. For more details on the irreducible representations of symplectic groups, see for example \cite[Chapter 17]{Fulton-Harris}.

\subsection{Results} Our first main result is the following:

\begin{theorem}\label{theorem1}
For $n\ge 1$ and $g\ge 3n$, the image of $\psi_n:H_n(\mathcal{I}_{g}^1)\to\bigwedge^n\left(\bigwedge^3 H\right)$ contains all irreducible $\mathrm{Sp}(H)$-subrepresentations of $\bigwedge^n\left(\bigwedge^3 H\right)$ of weight $3n$.
\end{theorem}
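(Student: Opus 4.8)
The plan is to produce elements of $\operatorname{im}(\psi_n)$ as \emph{abelian cycles of bounding pair maps with disjoint supports} and to show that these already span the weight $3n$ part. First recall the standard mechanism: pairwise commuting elements $\phi_1,\dots,\phi_n$ of a group $G$ determine a homomorphism $\mathbb{Z}^n\to G$, and the image of the canonical generator of $H_n(\mathbb{Z}^n)\cong\mathbb{Z}$ is the abelian cycle $A(\phi_1,\dots,\phi_n)\in H_n(G)$; under comultiplication $H_n(G)\to\bigwedge^n H_1(G)$ it goes to $[\phi_1]\wedge\cdots\wedge[\phi_n]$. Since $\psi_n$ is the composite of comultiplication with $\bigwedge^n\tau$, this gives $\psi_n\bigl(A(\phi_1,\dots,\phi_n)\bigr)=\tau(\phi_1)\wedge\cdots\wedge\tau(\phi_n)$. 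Bounding pair maps supported on pairwise disjoint subsurfaces of $S_g^1$ commute, so every such product of Johnson images lies in $\operatorname{im}(\psi_n)$; moreover the $\mathbb{Q}$-span $V$ of all of them is stable under conjugation by mapping classes, hence is an $\mathrm{Sp}(H_\mathbb{Z})$-subrepresentation of $\bigwedge^n(\bigwedge^3 H)$, and therefore, that representation being algebraic, an $\mathrm{Sp}(H)$-subrepresentation.

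Next I would reduce \cref{theorem1} to the statement $p(V)=W$, where $W\subseteq\bigwedge^n(\bigwedge^3 H)$ is the sum of the irreducible subrepresentations of weight $3n$ (the maximal weight, see \cref{secIrreps}) and $p\colon\bigwedge^n(\bigwedge^3 H)\twoheadrightarrow W$ is the equivariant projection. Indeed, writing $\bigwedge^n(\bigwedge^3 H)=W\oplus W_{<}$ with $W_{<}$ the sum of the remaining isotypic components: since weight is an invariant of an irreducible $\mathrm{Sp}(H)$-representation, $W$ and $W_{<}$ have no constituent in common, so $\operatorname{Hom}_{\mathrm{Sp}(H)}(W,W_{<})=0$ and every subrepresentation $M$ splits as $(M\cap W)\oplus(M\cap W_{<})$, whence $M\cap W=p(M)$. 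Applying this to $M=\operatorname{im}(\psi_n)\supseteq V$ shows that $p(V)=W$ forces $W\subseteq\operatorname{im}(\psi_n)$.

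To prove $p(V)=W$, fix a symplectic basis $a_1,b_1,\dots,a_g,b_g$ of $H$ and let $L=\langle a_1,\dots,a_g\rangle$, a Lagrangian. By Johnson's formula \cite{Johnson-Survey}, a genus-one bounding pair map whose curves represent a class $c$ and bound a genus-one subsurface with symplectic basis $x,y$ of its first homology has Johnson image $c\wedge x\wedge y$, with $c$ symplectically orthogonal to $x$ and $y$. For $g\ge 3n$ one can embed $n$ such configurations with pairwise disjoint supports, each living on a block of three handles, the $n$ blocks being disjoint; choosing for instance $c=a_{j_1}$, $x=a_{j_2}$, $y=b_{j_2}+a_{j_3}$ on the block $\{j_1,j_2,j_3\}$ makes the maximal-weight part of $c\wedge x\wedge y$ equal to $a_{j_1}\wedge a_{j_2}\wedge a_{j_3}$. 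Because the blocks are disjoint there is no cancellation between the factors, so $p$ applied to the corresponding product of Johnson images is exactly $\bigwedge_i(a_{j_1^i}\wedge a_{j_2^i}\wedge a_{j_3^i})$. Thus $p(V)$ contains all such "disjoint-block" decomposable vectors in $\bigwedge^n(\bigwedge^3 L)$, and, being stable under $\mathrm{GL}(L)\subseteq\mathrm{Sp}(H)$, it contains their $\mathrm{GL}(L)$-span. For $\dim L=g\ge 3n$ the $n$-tuples of $3$-planes in general position form a dense $\mathrm{GL}(L)$-orbit in $\mathrm{Gr}(3,L)^n$, and since general decomposables span $\bigwedge^n(\bigwedge^3 L)$ (and the span of a set equals the span of its Zariski closure), this $\mathrm{GL}(L)$-span is all of $\bigwedge^n(\bigwedge^3 L)$. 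Finally, $3n$ being the maximal weight, every highest weight vector of $W$ (its weight being dominant of size $3n$) in fact lies in $\bigwedge^n(\bigwedge^3 L)$, as one sees by expanding it in the weight basis: each term is a product of vectors $a_p\wedge a_q\wedge a_r$. Hence $p(V)\supseteq\bigwedge^n(\bigwedge^3 L)$ contains all highest weight vectors of $W$, so it contains the $\mathrm{Sp}(H)$-subrepresentation they generate, namely $W$; together with $p(V)\subseteq W$ this gives $p(V)=W$.

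The formal skeleton above is short, and the step I expect to be the main obstacle is making the middle of the previous paragraph fully precise: pinning down the convention for $\tau$ and verifying its value on bounding pair maps, checking that the required homological data are realised by embedded subsurfaces so that the $n$ configurations genuinely can be taken disjoint once $g\ge 3n$, and organising the representation-theoretic spanning argument. For the quantitative parts of the paper one will moreover want this spanning statement in effective form — an explicit combinatorial basis of each highest weight space of $W$ in terms of $n$-fold wedges of trivectors $a_p\wedge a_q\wedge a_r$, with multiplicities matched against the plethysm $\bigwedge^n\circ\bigwedge^3$ — and carrying out that bookkeeping cleanly is where most of the work lies.
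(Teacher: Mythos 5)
Your overall strategy coincides with the paper's: realize classes in $\mathrm{Im}(\psi_n)$ as abelian cycles on $n$ disjoint genus-one bounding pairs, use $\psi_n(A(f_1,\dots,f_n))=\tau([f_1])\wedge\cdots\wedge\tau([f_n])$, and then argue representation-theoretically that the resulting tensors generate the whole weight-$3n$ part. Your reduction to $p(V)=W$, the observation that a vector of (torus) weight of size $3n$ in $H^{\otimes 3n}$ involves only the $a_i$'s and hence lies in $\bigwedge^n\left(\bigwedge^3 L\right)$, and the $\mathrm{GL}(L)$-spanning argument are all sound; together they amount to the paper's Lemma 2.1 and Corollary \ref{Corollary:Main}.

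There is, however, one genuinely false step in the middle. You claim that with $c=a_{j_1}$, $x=a_{j_2}$, $y=b_{j_2}+a_{j_3}$ the maximal-weight part of $c\wedge x\wedge y$ is $a_{j_1}\wedge a_{j_2}\wedge a_{j_3}$, and hence that $p$ applied to the product of the $n$ Johnson images is exactly $\bigwedge_i(a_{j_1^i}\wedge a_{j_2^i}\wedge a_{j_3^i})$. This is not true: $c\wedge x\wedge y=a_{j_1}\wedge a_{j_2}\wedge b_{j_2}+a_{j_1}\wedge a_{j_2}\wedge a_{j_3}$, and the first summand is \emph{not} annihilated by the projection onto the weight-$3$ isotypic part of $\bigwedge^3 H$: its traceless component is $a_{j_1}\wedge a_{j_2}\wedge b_{j_2}-\tfrac{1}{g-1}\,\omega\wedge a_{j_1}\neq 0$. (``Weight'' in the sense of Section~\ref{secIrreps} is the isotypic degree, not the torus weight, and the isotypic projection does not discard non-extremal torus-weight components.) Consequently $p\bigl(\prod_i\tau([f_i])\bigr)$ carries many extra terms and is not the pure tensor your spanning argument requires. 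The conclusion is still reachable from your setup in two ways: (i) since $p(V)$ is an $\mathrm{Sp}(H)$-subrepresentation it is graded by torus weights, and the component of $p\bigl(\prod_i\tau([f_i])\bigr)$ of torus weight $\sum_i(\epsilon_{j_1^i}+\epsilon_{j_2^i}+\epsilon_{j_3^i})$ is exactly $\bigwedge_i(a_{j_1^i}\wedge a_{j_2^i}\wedge a_{j_3^i})$, so that vector does lie in $p(V)$; or (ii), as in the paper, start from the standard configuration with image $(a_1\wedge b_1\wedge a_3)\wedge\cdots\wedge(a_{3n-2}\wedge b_{3n-2}\wedge a_{3n})$ and act by $\prod_{i=1}^{n}(T_{3i-2,3i-1}-\mathbf{1})\in\mathbb{Q}[\mathrm{Sp}(H_{\mathbb{Z}})]$, which produces $(a_1\wedge a_2\wedge a_3)\wedge\cdots\wedge(a_{3n-2}\wedge a_{3n-1}\wedge a_{3n})$ on the nose inside $\mathrm{Im}(\psi_n)$ and also sidesteps the embedded-realizability issue you flag at the end. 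Either repair is short, but as written the key computation is incorrect.
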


\begin{remark}
The kernel of the dual map described by Kupers and Randal-Williams contains no irreducible subrepresentations of top weight, which means that if Questions 1 and 2 have positive answers, Theorem \ref{theorem1} follows immediately. 
\end{remark}

To get a feeling for what Theorem \ref{theorem1} means in practice, let us list the irreducible subrepresentations of $\bigwedge^n\left(\bigwedge^3 H\right)$ of top weight for $n\le 4$ in the table below, together with their dimension for $g=3n$. These can be computed using, for example, SAGE.

\begin{center}
 \begin{tabular}{|c|l|c|} 
 \hline
 $n$ & Irreducible subrepresentations of top weight in $\bigwedge^n\left(\bigwedge^3 H\right)$, for $g\ge 3n$ & Dimension for $g=3n$ \\
 \hline
 1 & $V_{1^3}$&  14 \\ [0.5ex]

 2 & $V_{2^2,1^2}\oplus V_{1^6}$ &19383\\[0.5ex]
 
 3 & $V_{3,2^3}\oplus V_{3^2,1^3}\oplus V_{2^3,1^3}\oplus V_{2^2,1^5}\oplus V_{1^9}$ &$\approx 7.5\cdot 10^7$ \\[0.5ex]
 
 4 &  $V_{4,3,2^2,1}\oplus V_{4^2,1^4}\oplus  V_{3^4}\oplus V_{3^2,2^2,1^2}\oplus V_{3^2,1^6}\oplus V_{3,2^3,1^3}$& $\approx5.3\cdot 10^{11}$\\[0.2ex]
 & $\oplus V_{2^6}\oplus V_{2^4,1^4}\oplus V_{2^3,1^3}\oplus V_{2^2,1^8}\oplus V_{1^{12}}$& \\[1ex]
 \hline
\end{tabular}
\end{center}

Contrary to what this table might suggest, the irreducible subrepresentations are generally not of multiplicity one for higher $n$.

Theorem \ref{theorem1} is actually a special case of our second main theorem. This second theorem requires a bit more work to prove, which is why we list and prove the theorems separately. In order to state this result a bit more clearly, we will consider the irreducible $\mathrm{Sp}(H)$-representation $V_{1^k}$ as a $\mathbb{Z}$-graded vector space concentrated in degree $k$, and use the Koszul sign convention. Note that in particular, $H$ is concentrated in degree 1. From now on, we will therefore use $\bigwedge$ to denote the free graded commutative algebra functor, rather than the exterior algebra functor of ungraded vector spaces.

\begin{restatable}{thm}{theoremtwo}\label{theorem2}
Let $n\ge 1$ and $\lambda=(\lambda_1^{k_1}>\cdots> \lambda_m^{k_m}>0)$ and $\mu=(\mu_1^{l_1}>\cdots>\mu_{m+2}^{l_{m+2}}>0)$ be partitions such that $\mu_{i}=\lambda_i+2$ for
$i=1,\ldots,m$ and $|\lambda|+|\mu|=n$. Let $k_{m+1}=k_{m+2}=0$, $k=\sum_{i=1}^{m} k_i$, $l=\sum_{i=1}^{m+2}l_i$ and suppose that $g\ge n+2k+l$. Then every irreducible subrepresentation of
$$\bigotimes_{i=1}^{m+2}\bigwedge^{k_i+l_i}V_{1^{\mu_i}}$$
of weight $n+2k$ lies is $\mathrm{Im}(\psi_n)$.
\end{restatable}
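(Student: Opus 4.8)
The plan is to bound $\mathrm{Im}(\psi_n)$ from below by evaluating $\psi_n$ on abelian cycles of collections of pairwise disjoint bounding pair maps. The two basic inputs are, first, that since $\psi_n$ is $\wedge^n\tau$ precomposed with the comultiplication $H_n(\mathcal{I}_{g}^1)\to\bigwedge^n H_1(\mathcal{I}_{g}^1)$, any pairwise commuting $f_1,\dots,f_n\in\mathcal{I}_{g}^1$ satisfy $\psi_n(A(f_1,\dots,f_n))=\tau(f_1)\wedge\dots\wedge\tau(f_n)$ for the associated abelian cycle $A(f_1,\dots,f_n)\in H_n(\mathcal{I}_{g}^1)$; and second, Johnson's formula that a bounding pair map $f$ of genus $h$ has $\tau(f)=c\wedge\omega_h$, where $c\in H$ is the isotropic homology class of the two curves and $\omega_h\in\bigwedge^2H$ is the image of the symplectic form of the genus-$h$ subsurface cobounded by the pair (so $c$ is orthogonal to the support of $\omega_h$). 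In particular the $V_1$-component of $\tau(f)$ is a nonzero multiple of $\omega\wedge c$ when $h\ge 1$, while its $V_{1^3}$-component is nonzero precisely when $h\le g-2$. Crucially, two bounding pair maps commute as soon as their four defining curves are disjoint, but the cobounded subsurfaces---and hence the handles contributing to the bivectors $\omega_h$---may then still overlap, and it is exactly this overlap which allows a wedge of Johnson images to be a genuine sum of decomposable vectors rather than a single decomposable.

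Hence $\mathrm{Im}(\psi_n)$ contains the subspace $W_n$ spanned by all $\tau(f_1)\wedge\dots\wedge\tau(f_n)$ with $f_1,\dots,f_n$ pairwise disjoint bounding pair maps in $S_g^1$. Conjugation by the mapping class group makes $W_n$ an $\mathrm{Sp}(H_{\mathbb{Z}})$-subrepresentation, and since $\bigwedge^n(\bigwedge^3 H)$ is algebraic it is enough to analyse $W_n$ as an $\mathrm{Sp}(H)$-representation. Theorem~\ref{theorem2} therefore reduces to the representation-theoretic statement that $W_n$ contains every weight-$(n+2k)$ irreducible summand of $\bigotimes_{i=1}^{m+2}\bigwedge^{k_i+l_i}V_{1^{\mu_i}}$; a short bookkeeping identifies $n+2k$ with the \emph{top} weight of this tensor product, and the case $\lambda=(1^n)$, $\mu=\varnothing$ recovers Theorem~\ref{theorem1}.

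To prove the reduced statement I would partition the $n$ bounding pair maps into $k+l$ disjointly-supported ``clusters'': for each of the $k_i$ occurrences of the part $\lambda_i$ one ``cheap'' cluster of $\lambda_i$ maps, and for each of the $l_i$ occurrences of the part $\mu_i$ one ``expensive'' cluster of $\mu_i$ maps, so that the cluster sizes total $|\lambda|+|\mu|=n$. By tuning the genera of the maps in a cluster and the (possibly overlapping) handles appearing in the bivectors $\omega_h$, a cluster of $t$ maps can be made to contribute, after projecting onto the relevant fundamental summand, a factor $V_{1^p}$ for a suitable $p\le 3t$; the key point is that a cheap cluster of size $\lambda_i$ already generates $V_{1^{\lambda_i+2}}=V_{1^{\mu_i}}$---this is the origin of the hypothesis $\mu_i=\lambda_i+2$---whereas an expensive cluster of the full size is needed for any fundamental factor whose index is not of that form (the hypothesis allows two such extra part-sizes, which is what the argument requires). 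The $k_i+l_i$ clusters sharing a common factor-type $V_{1^{\mu_i}}$ are arranged so that the wedges of their contributions reach all of $\bigwedge^{k_i+l_i}V_{1^{\mu_i}}$ relevant to the top weight. Placing distinct clusters in general position---which forces the leg classes $c_a$ to be pairwise orthogonal and forbids two bounding pairs from sharing a complete handle, and is what pins down the required bound $g\ge n+2k+l$---I would then, for each target irreducible $V_\nu$, fix a highest weight vector $w_\nu$ and exhibit a cluster configuration whose Johnson wedge pairs nontrivially with $w_\nu$, which reduces the claim to a signed count of matchings between the handles and legs across the clusters.

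The hard part is this last step. Since each individual Johnson image lies in $\bigwedge^3 H$, producing the large fundamental factors $V_{1^{\mu_i}}$ genuinely forces one to combine several bounding pair maps, and this must be done while respecting the rigid constraints of disjointness (pairwise orthogonal leg classes; no shared handles) and while simultaneously controlling the lower-weight $\omega$-corrections to the Johnson images so that, after projecting onto a fixed irreducible, the leading matching count is not cancelled. I would handle this by an induction on the clusters of $\mu$, combined with explicit descriptions---via the Littlewood restriction rules---of the highest weight vectors of $\bigotimes_i\bigwedge^{k_i+l_i}V_{1^{\mu_i}}$, matched cluster-by-cluster against the surface configurations; the structure underlying Theorem~\ref{theorem1}, where $\mu=\varnothing$ and all clusters have size $1$, is markedly simpler and can serve as a warm-up.
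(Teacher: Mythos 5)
Your construction coincides with the paper's: the ``cheap'' clusters of $\lambda_i$ maps contributing $V_{1^{\lambda_i+2}}=V_{1^{\mu_i}}$ and the ``expensive'' clusters of $\mu_i$ maps contributing $V_{1^{\mu_i}}$, placed on disjoint subsurfaces of genera $\lambda_i+2$ and $\mu_i+1$, are exactly the truly nested cycles $\sigma_{\lambda_i}$ and $\rho_{\mu_i}$ of Example \ref{exampleCF1}, and your configuration is the product cycle $\sigma_{\lambda_1}^{k_1}\rho_{\mu_1}^{l_1}\cdots\rho_{\mu_{m+2}}^{l_{m+2}}$ of Proposition \ref{PropProdAC}, with the same genus count $g\ge n+2k+l$. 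The gap is precisely the step you flag as hard, and it is not a detail: your proposed resolution (pair the Johnson wedge against a highest weight vector $w_\nu$ of each target irreducible and do a signed matching count) is both uncarried-out and, as stated, insufficient for the conclusion. The weight-$(n+2k)$ constituents of $\bigotimes_{i}\bigwedge^{k_i+l_i}V_{1^{\mu_i}}$ occur with multiplicity greater than one in general, and a subrepresentation can pair nontrivially with the highest weight vector of every copy of $V_\nu$ without containing any single copy (consider the diagonal in $V_\nu\oplus V_\nu$). So nonvanishing pairings per isomorphism class do not prove that \emph{every} irreducible subrepresentation of weight $n+2k$ lies in $\mathrm{Im}(\psi_n)$; you would need to show the projection to each isotypic component is surjective, and you have no mechanism for that, nor for controlling cancellation among the lower-weight $\omega$-corrections to the Johnson images.

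The paper closes this gap with two tools absent from your outline. First, Corollary \ref{Corollary:Main} (a consequence of Schur--Weyl duality): for any direct summand $V$ of $H^{\otimes k}$ with projection $p$, the whole weight-$k$ part of $V$ lies in the $\mathbb{Q}[\mathrm{Sp}(H)]$-span of the single vector $p(a_1\otimes\cdots\otimes a_k)$. This converts the problem from an irreducible-by-irreducible pairing computation into the production of one explicit tensor, and it is what handles the multiplicities. Second, Lemma \ref{lemmaTNF}: an explicit equivariant contraction $\Phi_n:\bigwedge^n\bigl(\bigwedge^3H\bigr)\to\bigwedge^{n+2}H$ sending the image of a truly nested cluster to a nonzero multiple of the decomposable vector $\omega_0\wedge c_1\wedge\cdots\wedge c_n$; the inductive computation proving this is where your ``signed count'' actually happens, and its outcome is a single nonzero constant rather than a family of matching numbers. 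Applying these contractions clusterwise and then acting by explicit transvections $T_{i,j}-\mathbf{1}\in\mathbb{Q}[\mathrm{Sp}(H)]$ produces exactly $\pi(a_1\otimes\cdots\otimes a_{n+2k})$, after which Corollary \ref{Corollary:Main} finishes the proof. Without substitutes for both devices your argument does not yet establish Theorem \ref{theorem2}.
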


In particular, we see that the case $m=1$, $\lambda_1=1$, $k_1=n$ gives us back Theorem \ref{theorem1}, since $\bigwedge^3 H\cong V_{1^3}\oplus V_1$.

\begin{remark}\label{remarkpointorboundary}
The Johnson homomorphism $\tau_J:\mathcal{I}_{g}^1\to\bigwedge^3 H_\mathbb{Z}$ actually factors as $\mathcal{I}_{g}^1\to\mathcal{I}_{g,1}\to\bigwedge^3 H_\mathbb{Z}$, where the first map is induced by sewing a disk with a marked point into the boundary component and extending diffeomorphisms by the identity. Thus the analogous results to Theorems \ref{theorem1} and \ref{theorem2} also hold in the pointed case. \end{remark}
 
Our strategy is to consider \textit{abelian cycles} in $H_n(\mathcal{I}_{g}^1)$ determined by $n$-tuples of pairwise disjoint \textit{bounding pair maps}. These notions will be defined in Section 3. What makes this strategy work is that the coproduct of an abelian cycle has a simple expression in terms of abelian cycles of lower degree (Lemma \ref{lemmaComult}), which makes the map $\psi_n$ easy to evaluate.

\begin{remark} The main inspiration for using abelian cycles are papers by Church and Farb \autocite{Church-Farb-Torelli} and Sakasai \autocite{Sakasai-Torelli}. In the former, these are used to study the images of certain homomorphisms $\tau_n:H_n(\mathcal{I}_{g,1})\to \bigwedge^{n+2} H$ that generalize $\tau$ to higher degree (see \cite[pp. 172-173]{Johnson-Survey}). Church and Farb prove, among other things, that for $n\ge 1$ and $g\ge n+2$, the homomorphism $\tau_n$ gives a surjection $H_n(\mathcal{I}_{g,1})\twoheadrightarrow V_{1^{n+2}}\oplus V_{1^n}$. This means that the dimension of $H_n(\mathcal{I}_{g,1})$ has a lower bound of order $\sim g^{n+2}$ , for all $n\ge 1$. We can note that by Theorem \ref{theorem1} and  Remark \ref{remarkpointorboundary} we get a lower bound on the dimensions of $H_n(\mathcal{I}_{g}^1)$ and $H_n(\mathcal{I}_{g,1})$ of order $\sim g^{3n}$ for $g\ge 3n$. 
\end{remark}

\begin{remark} Church, Ellenberg and Farb have used representation stability \cite[Section 7.2]{ChurchEllenbergFarb} to prove the following theorem:
\begin{theorem}[Church, Ellenberg, Farb] \label{thmCEF}
For each $n\ge 0$ there exists a polynomial $P_n(T)$ of degree at most $3n$ such that
$$\mathrm{dim}\left(\mathrm{Im}\left(\bigwedge^n H^1(\mathcal{I}_{g}^1)\to H^n(\mathcal{I}_{g}^1)\right)\right)=P_n(g).$$
for $g\gg n$.
\end{theorem}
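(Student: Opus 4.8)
The plan is to dualize, reducing the statement to the eventual polynomiality of $g\mapsto\mathrm{dim}\,\mathrm{Im}(\psi_n)$, and then to obtain the latter from the naturality of the Johnson homomorphism in the genus together with Noetherianity of symplectic coefficient systems. First I would record that over $\mathbb{Q}$ we have $H^{\ast}(\mathcal{I}_{g}^1)\cong H_{\ast}(\mathcal{I}_{g}^1)^\vee$ degreewise and that the cup product map $\bigwedge^n H^1(\mathcal{I}_{g}^1)\to H^n(\mathcal{I}_{g}^1)$ is Kronecker-dual to the comultiplication $H_n(\mathcal{I}_{g}^1)\to\bigwedge^n H_1(\mathcal{I}_{g}^1)$. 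Dualizing Johnson's isomorphism $\tau$ yields an isomorphism $\bigwedge^n\bigl(\bigwedge^3 H\bigr)^\vee\xrightarrow{\ \sim\ }\bigwedge^n H^1(\mathcal{I}_{g}^1)$, and by the alternative description of $\psi_n$ recalled above its composite with the cup product map is exactly the transpose $\psi_n^\vee$. Since $\bigwedge^n(\bigwedge^3 H)$ is finite dimensional, $\mathrm{Im}(\psi_n^\vee)\cong\mathrm{Im}(\psi_n)^\vee$, so
\[
\mathrm{dim}\,\mathrm{Im}\!\left(\bigwedge^n H^1(\mathcal{I}_{g}^1)\to H^n(\mathcal{I}_{g}^1)\right)=\mathrm{dim}\,\mathrm{Im}(\psi_n),
\]
and it suffices to prove that this agrees with a polynomial in $g$ of degree at most $3n$ for $g\gg n$.

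Next I would set up the functoriality. Fixing genus-stabilization maps $S_{g}^1\hookrightarrow S_{g+1}^1$ (sewing on a one-holed torus) and extending diffeomorphisms by the identity gives inclusions $\mathcal{I}_{g}^1\hookrightarrow\mathcal{I}_{g+1}^1$, compatible with the symplectic inclusions $H_1(S_g)\hookrightarrow H_1(S_{g+1})$ and the $\mathrm{Sp}$-actions. These assemble $g\mapsto H_n(\mathcal{I}_{g}^1)$ and $g\mapsto\bigwedge^n(\bigwedge^3 H)$ into modules over a symplectic analogue of the category $\mathrm{FI}$ — a $\mathrm{VIC}$-type category of finite-dimensional symplectic $\mathbb{Q}$-vector spaces and split isometric embeddings (cf.\ Church--Ellenberg--Farb, Putman--Sam). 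Since the Johnson homomorphism and the comultiplication are natural with respect to stabilization, the maps $\psi_n$ constitute a morphism of such modules, so $g\mapsto\mathrm{Im}(\psi_n)$ is a sub-coefficient-system of $g\mapsto\bigwedge^n(\bigwedge^3 H)$. The target is a polynomial functor of $H$ of degree $3n$, hence a finitely generated coefficient system, and by Noetherianity over a field of characteristic $0$ (or, as in the original argument, by restricting along a forgetful functor to $\mathrm{FI}$-modules) so is its sub-coefficient-system $g\mapsto\mathrm{Im}(\psi_n)$.

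It then remains to count. A finitely generated coefficient system of this type is representation stable: for $g\gg 0$ its value decomposes as $\bigoplus_\lambda V_\lambda^{\oplus m_\lambda}$ with each $m_\lambda$ independent of $g$, and in our case only partitions with $|\lambda|\le 3n$ occur, since $\mathrm{Im}(\psi_n)$ is a subrepresentation of $\bigwedge^n(\bigwedge^3 H)$, whose irreducible constituents all have weight at most $3n$. As $\mathrm{dim}\,V_\lambda$ is a polynomial in $g$ of degree $|\lambda|$ for $g$ large (Weyl dimension formula), summing gives $\mathrm{dim}\,\mathrm{Im}(\psi_n)=P_n(g)$ with $\deg P_n\le 3n$ for $g\gg n$, which together with the first step proves the theorem. (To make sense of the $V_\lambda$-decomposition of $\mathrm{Im}(\psi_n)$ one uses, as already noted, that an $\mathrm{Sp}(H_{\mathbb{Z}})$-invariant subspace of the algebraic representation $\bigwedge^n(\bigwedge^3 H)$ is automatically $\mathrm{Sp}(H)$-stable by Zariski density.)

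The main obstacle is the Noetherianity step, i.e.\ knowing that the image of $\psi_n$ is a finitely generated coefficient system; this is precisely where representation stability is doing the work. It is worth emphasising that the argument uses nothing about $H_n(\mathcal{I}_{g}^1)$ beyond the existence and naturality of the maps $\psi_n$, and in particular it sidesteps the open question of whether $\mathrm{dim}\,H_n(\mathcal{I}_{g}^1)$ is itself polynomial in $g$.
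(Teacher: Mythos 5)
The paper does not actually prove this statement: it is imported verbatim from Church--Ellenberg--Farb \cite{ChurchEllenbergFarb} with a citation, and is only used as a black box to deduce that $P_n$ has degree exactly $3n$. So there is no in-paper proof to compare against; what you have written is a reconstruction of the cited argument, and in outline it is the right one. Your opening reduction --- that for $g\ge 3$ the image of the cup product map is dual to $\mathrm{Im}(\psi_n^\vee)=\mathrm{Im}((\wedge^n\tau)^\vee\circ\Delta^\vee)$ and hence has the same dimension as $\mathrm{Im}(\psi_n)$ --- is correct, and is essentially the observation the paper itself records immediately after the theorem statement (there only as an inequality, which is all the corollary needs). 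The endgame (Zariski density to upgrade $\mathrm{Sp}(H_{\mathbb{Z}})$-invariance to $\mathrm{Sp}(H)$-invariance, weight at most $3n$, stable multiplicities, Weyl dimension formula) is also fine.

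The one step that does not close as literally written is finite generation of $g\mapsto\mathrm{Im}(\psi_n)$. You invoke ``Noetherianity over a field of characteristic $0$'' for a $\mathrm{VIC}$/$\mathrm{SI}$-type symplectic category, but local Noetherianity is not available off the shelf in that setting: the Putman--Sam Noetherianity theorems require a finite ring, Noetherianity for $\mathrm{SI}(\mathbb{Z})$-modules is not known in general, and passing to $\mathbb{Q}$-points does not obviously help since $\mathrm{Im}(\psi_n)$ is a priori only stable under the integral points. The correct move is exactly the one you relegate to a parenthesis: restrict along the functor $\mathrm{FI}\to\mathrm{SI}(\mathbb{Z})$ sending $[g]\mapsto\mathbb{Z}^{2g}$ with $\Sigma_g$ permuting hyperbolic pairs. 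Then $\bigwedge^n\bigl(\bigwedge^3 H\bigr)$ becomes a finitely generated $\mathrm{FI}$-module of weight at most $3n$, its sub-$\mathrm{FI}$-module $\mathrm{Im}(\psi_n)$ is finitely generated of weight at most $3n$ by $\mathrm{FI}$-Noetherianity over $\mathbb{Q}$, and eventual polynomiality of degree at most $3n$ follows from the standard dimension results for finitely generated $\mathrm{FI}$-modules. With that substitution the argument is complete; you should also justify that the conjugation action of $\mathrm{Sp}(H_{\mathbb{Z}})$ and the genus-stabilization maps satisfy the compatibility needed to define the module structure at all, which is the only genuinely topological input beyond naturality of $\tau_J$.
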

Since $H^n(\mathcal{I}_{g}^1)\cong H_n(\mathcal{I}_{g}^1)^*$ as $\mathrm{Sp}(H)$-representations for all $n\ge 0$, it follows that the dimension of the image in Theorem \ref{thmCEF} is bounded from below by the dimension of the image of $\psi_n:H_n(\mathcal{I}_g^1)\to\bigwedge^n\left(\bigwedge^3 H\right)$, so the following corollary follows from Theorems \ref{theorem1} and \ref{thmCEF}:

\begin{corollary}
The polynomial $P_n(T)$ in Theorem \ref{thmCEF} has degree $3n$.
\end{corollary}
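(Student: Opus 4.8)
The plan is to deduce the corollary by combining Theorem~\ref{theorem1} with an elementary dimension count; the point is that $\dim_{\mathbb{Q}}\bigwedge^n\left(\bigwedge^3 H\right)$ is, for $g$ large, a polynomial in $g$ of degree exactly $3n$ whose top-degree term is entirely accounted for by the part of weight $3n$. By the remark preceding the corollary we have $P_n(g)\ge\dim_{\mathbb{Q}}\mathrm{Im}(\psi_n)$ for $g\gg n$, so it is enough to bound $\dim_{\mathbb{Q}}\mathrm{Im}(\psi_n)$ from below by a polynomial in $g$ of degree $3n$. Let $W_g\subseteq\bigwedge^n\left(\bigwedge^3 H\right)$ denote the sum of all irreducible $\mathrm{Sp}(H)$-subrepresentations of weight $3n$. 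By Theorem~\ref{theorem1}, $W_g\subseteq\mathrm{Im}(\psi_n)$ once $g\ge 3n$, so I would reduce the problem to showing that $g\mapsto\dim_{\mathbb{Q}}W_g$ is eventually a polynomial of degree $3n$.

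For this, I would use that $\bigwedge^n\left(\bigwedge^3 H\right)\cong\bigwedge^n H_1(\mathcal{I}_g^1)$ is a homogeneous degree $3n$ polynomial functor of $H$, hence representation stable: writing $\bigwedge^n\left(\bigwedge^3 H\right)=\bigoplus_\lambda V_\lambda^{\oplus m_\lambda}$, the multiplicities $m_\lambda$ are independent of $g$ for $g\gg n$, and $m_\lambda=0$ whenever the weight $|\lambda|$ exceeds $3n$ (the weight bound recalled in Section~\ref{secIrreps}). Moreover, for a fixed partition $\lambda$ the dimension $\dim_{\mathbb{Q}}V_\lambda$ is, for $g$ large, a polynomial in $g$ of degree $|\lambda|$ with positive leading coefficient; this is standard (cf.\ \cite[Chapter~17]{Fulton-Harris} and the Weyl dimension formula) and can be seen directly in the relevant cases, e.g.\ $\dim_{\mathbb{Q}}V_{1^k}=\binom{2g}{k}-\binom{2g}{k-2}$. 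Setting $W_g'=\bigoplus_{|\lambda|<3n}V_\lambda^{\oplus m_\lambda}$, we get for $g\gg n$
$$\dim_{\mathbb{Q}}\bigwedge^n\left(\bigwedge^3 H\right)=\dim_{\mathbb{Q}}W_g+\dim_{\mathbb{Q}}W_g',$$
where $\dim_{\mathbb{Q}}W_g'$ is a polynomial in $g$ of degree at most $3n-1$ and $\dim_{\mathbb{Q}}W_g$ is a polynomial in $g$ of degree at most $3n$. On the other hand the left-hand side equals $\binom{\binom{2g}{3}}{n}$, a polynomial in $g$ of degree exactly $3n$ with leading coefficient $\tfrac{1}{n!}\big(\tfrac{4}{3}\big)^{n}>0$. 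Comparing the coefficients of $g^{3n}$ forces $\dim_{\mathbb{Q}}W_g$ to be a polynomial of degree exactly $3n$ (in particular $W_g\neq 0$, so weight $3n$ is indeed attained).

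Finally, $P_n(g)\ge\dim_{\mathbb{Q}}\mathrm{Im}(\psi_n)\ge\dim_{\mathbb{Q}}W_g$ for $g\gg n$, with $\dim_{\mathbb{Q}}W_g$ of degree $3n$, so $\deg P_n\ge 3n$; together with the inequality $\deg P_n\le 3n$ from Theorem~\ref{thmCEF} this gives $\deg P_n=3n$. The only mildly technical ingredient is the bookkeeping in the second paragraph --- that each $\dim_{\mathbb{Q}}V_\lambda$ is polynomial of degree $|\lambda|$ and that the isotypic decomposition of $\bigwedge^n\left(\bigwedge^3 H\right)$ stabilizes --- but the former is elementary from the Weyl dimension formula and the latter is exactly the representation stability of \cite{ChurchEllenbergFarb} that already underlies Theorem~\ref{thmCEF}, so I do not anticipate a genuine obstacle: the corollary is really a short consequence of Theorems~\ref{theorem1} and~\ref{thmCEF}.
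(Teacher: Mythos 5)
Your argument is correct and is essentially the paper's own (the paper leaves the deduction implicit, noting only that $P_n(g)\ge\dim_{\mathbb{Q}}\mathrm{Im}(\psi_n)$ by duality and that Theorem \ref{theorem1} forces this image to grow like $g^{3n}$). You have simply made the dimension count explicit --- comparing the leading term of $\binom{\binom{2g}{3}}{n}$ with the lower-weight contributions --- which is a fine way to justify that the weight-$3n$ part has dimension of order exactly $g^{3n}$.
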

\end{remark}

For the final result of the paper, we will work with a marked point instead of a boundary component, since this gives us something a bit more general. Abusing notation, we will use $\psi_n$ to denote the map $H_n(\mathcal{I}_{g,1})\to\bigwedge^n\left(\bigwedge^3 H\right)$ as well. Which $\psi_n$ is intended will be clear from context. The last main result concerns the limitations of using abelian cycles of the type we consider. More specifically:

\begin{restatable}{thm}{theoremthree}\label{theorem3}
Let $n\ge 2$ and $A_n(\mathcal{I}_{g,1})\subset H_n(\mathcal{I}_{g,1})$ be the subrepresentation generated by abelian cycles determined by pairwise disjoint bounding pair maps. Then $\psi_n(A_n(\mathcal{I}_{g,1}))$ is concentrated in weights $n, n+2,\ldots,3n$. However, $\psi_n(H_n(\mathcal{I}_{g,1}))$ also contains a subrepresentation of weight $n-2$ for all $n\ge 2$ and $g\gg 0$. In particular, $A_n(\mathcal{I}_{g,1})\subset H_n(\mathcal{I}_{g,1})$ is a proper subrepresentation for $n\ge 2$ and $g\gg0$.
\end{restatable}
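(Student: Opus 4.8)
The plan is to prove the two halves of the statement — the upper bound on the weights occurring in $\psi_n(A_n(\mathcal{I}_{g,1}))$, and the existence of a weight-$(n-2)$ constituent in $\psi_n(H_n(\mathcal{I}_{g,1}))$ — separately, and then observe that they give the properness at once: $n-2$ does not lie in $\{n,n+2,\dots,3n\}$, so a weight-$(n-2)$ subrepresentation of $\psi_n(H_n(\mathcal{I}_{g,1}))$ cannot be contained in $\psi_n(A_n(\mathcal{I}_{g,1}))$, and since $A_n(\mathcal{I}_{g,1})$ is by definition a subrepresentation of $H_n(\mathcal{I}_{g,1})$ it must be a proper one.

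For the first half, let $Z=Z(f_1,\dots,f_n)$ be the abelian cycle of $n$ pairwise disjoint bounding pair maps. By Lemma \ref{lemmaComult} the image of $Z$ under the comultiplication $H_n(\mathcal{I}_{g,1})\to\bigwedge^n H_1(\mathcal{I}_{g,1})$ is $[f_1]\wedge\cdots\wedge[f_n]$, so $\psi_n(Z)=\tau(f_1)\wedge\cdots\wedge\tau(f_n)$ in $\bigwedge^n(\bigwedge^3 H)$. Here I would use the standard description of $\tau_J$ on a bounding pair map: it has the form $d\wedge\sigma$, where $d\in H_{\mathbb{Z}}$ is the common homology class of the two curves of the pair and $\sigma\in\bigwedge^2 H_{\mathbb{Z}}$ is the image of the symplectic form of the cobounded subsurface, so that $d$ is symplectically orthogonal to everything occurring in $\sigma$ and $d\wedge d=0$. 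For a pairwise disjoint family the cobounded subsurfaces are either nested or disjoint, and in either case one obtains $\langle d_i,d_j\rangle=0$ and, by a short determinant computation, $\langle\tau(f_i),\tau(f_j)\rangle=0$ for the invariant pairing on $\bigwedge^3 H$. Decomposing $\bigwedge^3 H=V_{1^3}\oplus V_1$, writing each $\tau(f_i)$ as its primitive part plus a multiple of $\omega\wedge d_i$, and expanding $\psi_n(Z)$ over subsets $S\subseteq\{1,\dots,n\}$ so that the $S$-term lies in $\bigwedge^{|S|}V_{1^3}\otimes\bigwedge^{n-|S|}V_1$ with $V_1$-tensor factor the primitive element $\bigwedge_{i\notin S}d_i$, one reduces to bounding the number of non-zero symplectic contractions that can be applied to the $V_{1^3}$-factors; the orthogonality relations above are what make this bound $\leq|S|$, so that $\psi_n(Z)$ has no constituent of weight below $n$. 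Since $\psi_n(Z)$ lies in $H^{\otimes 3n}$ it also has weight at most $3n$ and weight congruent to $n$ modulo $2$, so $\psi_n(A_n(\mathcal{I}_{g,1}))$ is concentrated in weights $n,n+2,\dots,3n$.

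For the second half I would first dispose of the case $n=2$: Hain's determination of the image of (the dual of) $\psi_2$ in \autocite{Hain-Infinitesimal} shows that it contains a subrepresentation of weight $0$, and since $\mathrm{Im}(\psi_2)$ is a subrepresentation of the algebraic — hence semisimple — representation $\bigwedge^2(\bigwedge^3 H)$, it contains $V_0$ as a direct summand, so there is a class $\xi\in H_2(\mathcal{I}_{g_0,1})$ with $\psi_2(\xi)$ a non-zero $\mathrm{Sp}(H)$-invariant vector. For general $n\geq 2$ I would take the abelian cycle $\zeta=Z(f_1,\dots,f_{n-2})\in H_{n-2}(\mathcal{I}^1_h)$ determined by $n-2$ genus-one bounding pairs on pairwise disjoint subsurfaces and combine it with $\xi$ via the connected-sum map $H_{n-2}(\mathcal{I}^1_h)\otimes H_2(\mathcal{I}_{g_0,1})\to H_n(\mathcal{I}_{h+g_0,1})$ obtained by gluing the bounded surface onto the punctured one; since $\tau_J$ is additive under this operation, $\psi_n(\zeta\cdot\xi)=\psi_{n-2}(\zeta)\wedge\psi_2(\xi)$. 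Applying the contraction $\bigwedge^3 H\to H$ to each factor of $\psi_{n-2}(\zeta)=\bigwedge_i(d_i\wedge a_i\wedge b_i)$ produces $\bigwedge_i d_i$, which is non-zero and primitive, so $\psi_{n-2}(\zeta)$ has a non-zero component in a copy of $V_{1^{n-2}}$ lying in the summand $\bigwedge^{n-2}V_1$ of $\bigwedge^{n-2}(\bigwedge^3 H)$; wedging this with the invariant vector $\psi_2(\xi)$ is transparent on representations and lands the contribution in a single graded summand $\bigwedge^{p}V_{1^3}\otimes\bigwedge^{q}V_1$ of $\bigwedge^n(\bigwedge^3 H)$, where it is manifestly non-zero and of weight $n-2$. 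Hence $\psi_n(H_n(\mathcal{I}_{g,1}))$ contains a subrepresentation of weight $n-2$ for $g\gg 0$, and together with the first half this gives $A_n(\mathcal{I}_{g,1})\subsetneq H_n(\mathcal{I}_{g,1})$ for $n\geq 2$ and $g\gg 0$.

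I expect the second half to be the main obstacle. Producing the weight-$(n-2)$ class cannot be done with abelian cycles alone — that is exactly the content of the first half — so for $n=2$ one genuinely needs Hain's explicit computation, there being no soft reason that the cup product does not annihilate the entire weight-$0$ part of $\bigwedge^2(\bigwedge^3 H)$. The other delicate point is that, a priori, several pairs of weight-graded pieces of $\psi_2(\xi)$ and $\psi_{n-2}(\zeta)$ feed into the weight-$(n-2)$ part of their product and could cancel; what avoids this is the combination of the freedom to take $\psi_2(\xi)$ $\mathrm{Sp}(H)$-invariant with the decomposition of $\bigwedge^n(\bigwedge^3 H)$ by degree in $V_{1^3}$ and $V_1$, which isolates the relevant term in a summand where no cancellation can occur.
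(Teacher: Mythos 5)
Your overall architecture (prove the weight bound for abelian cycles, exhibit a weight-$(n-2)$ class, conclude properness) matches the paper's, but both halves have genuine gaps. In the first half, the bound ``at most $|S|$ non-vanishing contractions on the $S$-term'' does not follow from the relations you state. The pairings $\langle d_i,d_j\rangle=0$ and $\langle\tau(f_i),\tau(f_j)\rangle=0$ control only contractions among the $d_i$ and the \emph{complete} pairing of two factors of $\bigwedge^3H$; they say nothing about partial cross-contractions such as a single contraction between a position of $\sigma_i$ and a position of $\sigma_j$ (nonzero when the pairs are nested, since then $H_1(\Sigma_j)\subset H_1(\Sigma_i)$ pairs with itself), or a matching like two $p_1$--$p_2$ contractions together with one $p_1$--$p_3$ and one $p_2$--$p_3$ contraction, which is not a complete pairing of any two factors. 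The paper's Lemma \ref{lemmaLagrangian} supplies exactly the missing input: a \emph{single} Lagrangian $L$, built geometrically by cutting along all the bounding pairs at once and choosing pairwise non-intersecting curves spanning Lagrangians of every complementary piece, such that every $\tau([f_k])$ lies in $\bigwedge^2L\wedge H$. Then $\psi_n(A(f_1,\dots,f_n))$ sits in $L^{\otimes 2n}\otimes H^{\otimes n}$ up to permutation, and isotropy of $L$ forces every non-vanishing contraction to use one of the $n$ non-$L$ positions, giving weight $\ge 3n-2n=n$ with no case analysis. You would need to either prove this global statement or find a purely algebraic substitute; the local orthogonality relations are not enough.

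In the second half there are two problems. For $n=2$, Hain's computation lives in the unpointed setting, where the target is $\bigwedge^2\bigl(\bigl(\bigwedge^3H\bigr)/H\bigr)$; the weight-$0$ class the paper produces is $\psi_2(\pi^![*])$, the image of $\omega$ under inserting $\omega$ into each factor, and this class \emph{dies} under the projection $\bigwedge^2\bigwedge^3H\twoheadrightarrow\bigwedge^2 U$, so it is invisible in Hain's setting, and there is no map transferring a weight-$0$ class from the unpointed image back to the pointed one ($H_2(\mathcal{I}_{g,1})\to H_2(\mathcal{I}_g)$ being surjective on groups does not make it surjective on $H_2$). The paper instead constructs the class directly from the universal surface bundle over Torelli space via point-pushing and computes $\Delta([S_g])=\omega$. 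For general $n$, the ``connected-sum map'' $H_{n-2}(\mathcal{I}^1_h)\otimes H_2(\mathcal{I}_{g_0,1})\to H_n(\mathcal{I}_{h+g_0,1})$ you invoke is not defined: elements of $\mathcal{I}_{g_0,1}$ do not fix the boundary of a removed disk, so they cannot be extended over a glued-on bounded surface. The paper's replacement is the Gysin map $\pi^!$ together with the projection formula, and this is not a cosmetic change: the resulting diagram measures the abelian-cycle factor in $\bigwedge^{n-2}U$ rather than $\bigwedge^{n-2}\bigwedge^3H$, which kills precisely the $V_1$-components $\omega\wedge d_i$ your non-vanishing argument relies on. One is then forced to extract weight $n-2$ from the traceless parts, which is what produces the coefficient $(2m-2)^m/(g-1)^m$ in the paper and explains why $n=3$ (where this coefficient vanishes) requires a separate argument via Kupers--Randal-Williams. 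The fact that your argument would handle $n=3$ uniformly is a sign that the cancellation analysis is not actually under control.
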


\begin{remark}
We will see that the map induced on homology by $\mathcal{I}_g^1\to\mathcal{I}_{g,1}$ maps $A_n(\mathcal{I}_g^1)\to A_n(\mathcal{I}_{g,1})$. From this it follows that the first claim of Theorem \ref{theorem3} holds for $\psi_n(A_n(\mathcal{I}_g^1))$ as well. To prove the second claim, however, we construct a homology class that requires a marked point rather than a boundary component, and it is not clear how to construct something analogous using a boundary component. However, it follows from results by Kupers and Randal-Williams \cite{Kupers-Torelli} that $H_3(\mathcal{I}_g^1)$ contains an irreducible subrepresentation of weight 1, so it seems reasonable to expect Theorem \ref{theorem3} to hold in its entirety for $A_n(\mathcal{I}_g^1)$ as well. If we also believe that Questions 1-3 above have positive answers, it follows from the results by Kupers and Randal-Williams that for large $n$ there are many classes in $H_n(\mathcal{I}_{g}^1)$ that are of lower weight than $n$ and that thus cannot lie in $A_n(\mathcal{I}_{g}^1)$.
\end{remark}

\subsection{Structure of the paper} In Section \ref{SchurWeyl} we give a brief description of \textit{Schur-Weyl duality for symplectic groups}, which is a fundamental result in the representation theory of symplectic groups. We then describe some simple but useful consequences of this theorem.

In Section \ref{ACandBPM} we recall some basics from the theory of mapping class groups in order to define bounding pair maps. We then introduce abelian cycles and show how $\psi_n$ can be easily evaluated on these. This allows us to give a simple proof of Theorem \ref{theorem1}.

In Section \ref{firstproofsection} we prove Theorem \ref{theorem2}. Here we see how considering surfaces with a boudnary component allows us to define a ``multiplication'' of abelian cycles and thereby prove the theorem in a systematic way. 

In the final section, Section \ref{secondproofsection}, we prove Theorem \ref{theorem3}.

\subsection*{Acknowledgement} I would like to thank my PhD supervisor, Dan Petersen, whose suggestions lead me to investigate the topics of this paper, and whose supervision and support has been invaluable throughout the project.

\section{Schur-Weyl duality for symplectic groups}\label{SchurWeyl}

We want to investigate how the image of $\psi_n:H_n(\mathcal{I}_{g}^1)\to\bigwedge^n\left(\bigwedge^3 H\right)$ decomposes into irreducible representations of $\mathrm{Sp}(H)$ and to this end we first need to understand how $H^{\otimes k}$ decomposes into irreducible subrepresentations, for any $k\ge 1$. This decomposition is well understood, by so called \textit{Schur-Weyl duality} for symplectic groups. The representation $H^{\otimes k}$ contains irreducible subrepresentation of weight at most $k$ (see Section \ref{secIrreps}) and the irreducibles of top weight form a subrepresentation which we denote by $H^{\langle k\rangle}$. We call the tensors in $H^{\langle k\rangle}$ \textit{traceless}, since these are precisely those tensors in $H^{\otimes k}$ that are mapped to zero by all of the contraction maps $C^k_{i,j}:H^{\otimes k}\to H^{\otimes k-2}$, given by contracting with the $i$th and $j$th factors with the symplectic form. The space $H^{\langle k\rangle}$ has both a left action by $\mathrm{Sp}(H)$ and a commuting right action by the symmetric group $\Sigma_k$, by permuting the tensor factors. Schur-Weyl duality\footnote{There is also a more general version that describes the decomposition of all of $H^{\otimes k}$, but this requires more background to describe and is not required for our purposes.} describes how $H^{\langle k\rangle}$ decomposes as a representation of $\mathrm{Sp}(H)\times \Sigma_k$. The irreducible representations of $\Sigma_k$ are classified by partitions of weight $k$, as described in \autocite[Chapter 4]{Fulton-Harris} and for such a partition $\lambda$ we denote the corresponding irreducible representation of $\Sigma_k$ by $\sigma_\lambda$. Schur-Weyl duality for symplectic groups, which is described for example in  \cite[Chapter 17.3]{Fulton-Harris}, then states that
$$H^{\langle k\rangle}\cong\bigoplus_{|\lambda|=k}V_\lambda\otimes\sigma_\lambda.$$
Up to isomorphism, each $V_\lambda$ can be constructed as the image of a certain \textit{Young symmetrizer} $\mathbf{c}_\lambda:H^{\langle k\rangle}\to H^{\langle k\rangle}$, which is an element of $\mathbb{Q}[\Sigma_k]$. For a proof of this fact, once again see \cite[Chapters 6 and 17.3]{Fulton-Harris}. Young symmetrizers are defined using \textit{Young diagrams}, which are diagrams that represent partitions, as illustrated in Figure \ref{figYoung}. A \textit{Young tableau} is a Young diagram with a choice of numbering of the boxes.
\begin{figure}[htbp]
  \centering
  \includegraphics[scale=0.4]{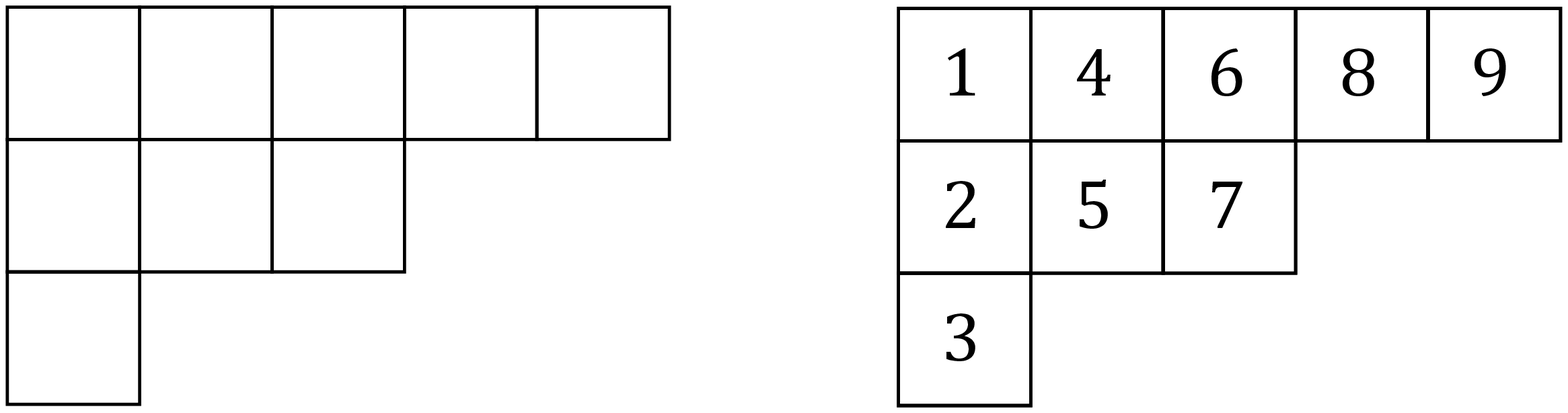}
  \caption{The Young diagram representing the partition $(5\ge 3\ge 1)$ and a corresponding Young tableau.}
  \label{figYoung}
\end{figure}
To define $\mathbf{c}_\lambda$, for $|\lambda|=k$, we first need to pick a Young tableau with an underlying Young diagram representing $\lambda$. We denote this tableau by $T$ and let $P_\lambda=\{\sigma\in\Sigma_k\mid \sigma\text{ preserves each row of }T\}$ and $Q_\lambda=\{\sigma\in\Sigma_k\mid \sigma\text{ preserves each column of }T\}$. We now define $\mathbf{c}_\lambda:=\mathbf{a}_\lambda \mathbf{b}_\lambda\in\mathbb{Q}[\Sigma_k],$
where 
$$\mathbf{a}_\lambda:=\sum_{\sigma\in P_\lambda}\sigma,\ \ \ \mathbf{b}_\lambda:=\sum_{\sigma\in Q_\lambda}\mathrm{sgn}(\sigma)\sigma.$$
If we take the Young tableau of $\lambda$ that is numbered analogously to the tableau in Figure \ref{figYoung} and let $\mu=(\mu_1\ge\cdots\ge\mu_l)$ denote the partition we get by transposing the rows and columns of the Young diagram representing $\lambda$, it follows that the image of the corresponding Young symmetrizer $\mathbf{c}_\lambda:H^{\langle k\rangle}\to H^{\langle k\rangle}$ is the subrepresentation of
$$\bigwedge^{\mu_1}H\otimes\cdots\otimes\bigwedge^{\mu_l} H$$
generated by the tensor
$$(a_1\wedge\cdots\wedge a_{\mu_1})\otimes\cdots\otimes(a_1\wedge\cdots\wedge a_{\mu_l}),$$
for any choice of symplectic basis $\{a_1,b_1,\ldots,a_g,b_g\}$ of $H$. From this, it follows that as an $\mathrm{Sp}(H)$-representation, $V_\lambda\otimes\sigma_{\lambda}$ is generated by tensors of the form
for $\sigma\in\Sigma_k$. Here the action is defined by viewing $\bigwedge^{\mu_1}H\otimes\cdots\otimes\bigwedge^{\mu_l} H$ as a subspace of $H^{\otimes k}$, i.e as the composite map
$$\bigwedge^{\mu_1}H\otimes\cdots\otimes\bigwedge^{\mu_l} H\hookrightarrow H^{\otimes k}\overset{\cdot\sigma}{\longrightarrow} H^{\otimes k},$$
where the first map is the standard inclusion. This allows us to prove the following lemma:

\begin{lemma}
For all $g\ge k\ge 1$, the $\mathrm{Sp}(H)$-representation $H^{\langle k\rangle}$ is generated by the tensor
\begin{equation}
    a_{1}\otimes\cdots \otimes a_{k},
\end{equation}
for any fixed symplectic basis $\{a_1,b_1,\ldots,a_g,b_g\}$ of $H$.
\end{lemma}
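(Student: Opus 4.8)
The plan is to combine Schur--Weyl duality with the explicit Young symmetrizer generators described above. Write $v=a_1\otimes\cdots\otimes a_k$ and let $W\subseteq H^{\otimes k}$ be the $\mathrm{Sp}(H)$-subrepresentation it generates. First I would observe that $v$ is traceless: since $a_1,\dots,a_k$ span an isotropic subspace, each contraction $C^k_{i,j}$ sends $v$ to a scalar multiple of $\langle a_i,a_j\rangle=0$, so $v\in H^{\langle k\rangle}$; as $H^{\langle k\rangle}$ is $\mathrm{Sp}(H)$-stable, this already gives $W\subseteq H^{\langle k\rangle}$, and the point is to prove equality.

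The key structural observation is that $W$ is stable not only under $\mathrm{Sp}(H)$ but also under the commuting right $\Sigma_k$-action on $H^{\langle k\rangle}$. Given $\sigma\in\Sigma_k$, regard it as a permutation of $\{1,\dots,g\}$ fixing $k+1,\dots,g$; then the simultaneous permutation $a_i\mapsto a_{\sigma(i)}$, $b_i\mapsto b_{\sigma(i)}$ of the symplectic basis preserves $\omega$ and hence defines an element $g_\sigma\in\mathrm{Sp}(H_{\mathbb{Z}})\subseteq\mathrm{Sp}(H)$, and $g_\sigma\cdot v=v\cdot\sigma$. Since $W$ is spanned by the vectors $h\cdot v$ with $h\in\mathrm{Sp}(H)$ and the two actions commute, $(h\cdot v)\cdot\sigma=h\cdot(v\cdot\sigma)=(hg_\sigma)\cdot v\in W$, so $W$ is an $\mathrm{Sp}(H)\times\Sigma_k$-subrepresentation of $H^{\langle k\rangle}$. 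By Schur--Weyl duality $H^{\langle k\rangle}\cong\bigoplus_{|\lambda|=k}V_\lambda\otimes\sigma_\lambda$ is multiplicity-free as an $\mathrm{Sp}(H)\times\Sigma_k$-representation (each summand is irreducible and they are pairwise non-isomorphic), so $W=\bigoplus_{\lambda\in S}V_\lambda\otimes\sigma_\lambda$ for some set $S$ of partitions of weight $k$; note that since $g\ge k$, all partitions of weight $k$ genuinely appear in the decomposition, as they have length at most $k$.

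It remains to show $S$ contains every partition of $k$, for which it suffices to exhibit a nonzero element of $W\cap(V_\lambda\otimes\sigma_\lambda)$ for each $\lambda\vdash k$. I would take $v\cdot\mathbf{c}_\lambda$: it lies in $W$ because $W$ is $\Sigma_k$-stable, and it lies in the summand $V_\lambda\otimes\sigma_\lambda$ because $\mathbf{c}_\lambda$ annihilates $\sigma_\mu$ for $\mu\ne\lambda$ (a standard property of Young symmetrizers, see \cite[Ch.~4]{Fulton-Harris}). To see it is nonzero, expand
\[
v\cdot\mathbf{c}_\lambda=\sum_{\rho\in P_\lambda}\ \sum_{\sigma\in Q_\lambda}\mathrm{sgn}(\sigma)\,v\cdot(\rho\sigma),\qquad v\cdot\pi=a_{\pi(1)}\otimes\cdots\otimes a_{\pi(k)} .
\]
Since the $a_i$ are distinct, these permutation tensors are linearly independent in $H^{\otimes k}$, and the tensor $v=v\cdot e$ occurs in the sum only for pairs with $\rho\sigma=e$, i.e.\ $\rho=\sigma^{-1}\in P_\lambda\cap Q_\lambda=\{e\}$; hence the coefficient of $v$ in $v\cdot\mathbf{c}_\lambda$ equals $\mathrm{sgn}(e)=1$, so $v\cdot\mathbf{c}_\lambda\ne 0$. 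Therefore $\lambda\in S$ for every $\lambda\vdash k$, and $W=H^{\langle k\rangle}$.

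The only delicate points are the two bookkeeping steps: checking that permuting the tensor factors is induced by a genuine symplectic transformation (which is why the $a_i$ and $b_i$ must be permuted in tandem), and the fact that right multiplication by $\mathbf{c}_\lambda$ isolates the $V_\lambda\otimes\sigma_\lambda$-isotypic component. Both are standard; the remaining content is the elementary observation that $P_\lambda\cap Q_\lambda=\{e\}$.
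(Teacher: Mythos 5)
Your proof is correct, but it takes a genuinely different route from the paper's. The paper argues ``top--down'': it starts from the explicit Young-symmetrizer generators of each summand $V_\lambda\otimes\sigma_\lambda$ (built from $(a_1\wedge\cdots\wedge a_{\mu_1})\otimes\cdots\otimes(a_1\wedge\cdots\wedge a_{\mu_l})$ and its $\Sigma_k$-translates), observes that these expand into monomials $a_{i_1}\otimes\cdots\otimes a_{i_k}$ with possibly repeated indices, and then shows each such monomial lies in the $\mathbb{Q}[\mathrm{Sp}(H)]$-span of $a_1\otimes\cdots\otimes a_k$ by acting with the transvection-type elements $S_{p,q}-\mathbf{1}$ to manufacture the repetitions. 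You argue ``bottom--up'': you show the submodule $W$ generated by $v=a_1\otimes\cdots\otimes a_k$ is also stable under the right $\Sigma_k$-action (since permuting the tensor factors of $v$ is realized by a symplectic basis permutation commuting with the left action), invoke the multiplicity-freeness of $H^{\langle k\rangle}$ as an $\mathrm{Sp}(H)\times\Sigma_k$-module to conclude $W$ is a union of full summands, and then verify each summand is hit by extracting the coefficient of $v$ in $v\cdot\mathbf{c}_\lambda$ via $P_\lambda\cap Q_\lambda=\{e\}$. All the steps you flag as delicate check out: $v$ is indeed traceless, the two actions do commute, and right multiplication by $\mathbf{c}_\lambda$ does land in the $\sigma_\lambda$-isotypic piece. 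Your version is arguably more conceptual and entirely avoids dealing with tensors having repeated factors, and it makes transparent where $g\ge k$ enters (distinctness of the $a_i$ and nonvanishing of all $V_\lambda$ with $|\lambda|=k$). What the paper's more hands-on version buys is that it proves the slightly stronger statement that \emph{every} monomial in the $a_i$'s lies in the span of $a_1\otimes\cdots\otimes a_k$, and its key device $S_{p,q}-\mathbf{1}$ is exactly the trick reused in the proofs of Theorems \ref{theorem1} and \ref{theorem2}, so the two arguments are not interchangeable in the larger architecture of the paper even though either suffices for the lemma itself.
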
 

\begin{proof}
By the discussion above it suffices to prove that every tensor $A$ of the form
$$a_{i_1}\otimes\cdots\otimes a_{i_k},$$
for $i_1,\ldots,i_j\in\{1,\ldots,g\}$, lies in the $\mathbb{Q}[\mathrm{Sp}(H)]$-span of the tensor $a_1\otimes\cdots\otimes a_{k}$. We may reorder the symplectic basis in such a way that for each $1\le j\le k$, we have $a_{i_j}=a_j$, unless $a_{i_j}=a_{i_l}$ for some $l<j$. In other words, we may assume that $A$ agrees with $a_1\otimes\cdots\otimes a_k$ in every tensor factor except for in those where $A$ has a repeating element.

For $p,q\in\{1,\ldots,g\}$ such that $p\neq q$, let $S_{p,q}\in \mathrm{Sp}(H)$ denote the linear map defined by mapping $a_p\mapsto a_p+a_q$, $b_q\mapsto b_q-b_p$ and mapping all other basis vectors to themselves. If $a_{i_j}=a_l$ for some $l<j$, we can act on $a_1\otimes\cdots \otimes a_k$ with $S_{j,l}-\mathbf{1}$ and get
$$a_1\otimes\cdots\otimes a_{j-1}\otimes a_l\otimes a_{j+1}\otimes\cdots\otimes a_k.$$
We can do this for every repeated tensor factor of $A$ and thus see that $A$ does indeed lie in the $\mathbb{Q}[\mathrm{Sp}(H)]$-span of $a_1\otimes\cdots\otimes a_k$.
\end{proof}

This has the following immediate corollary:
\begin{corollary}\label{Corollary:Main}
Let $V$ be a direct summand of $H^{\otimes k}$ for $g\ge k\ge 1$, as a representation of $\mathrm{Sp}(H)$. Let $p:H^{\otimes k}\to V$ be the projection map and fix a symplectic basis $\{a_1,b_1,\ldots,a_g,b_g\}$ of $H$. Then the subspace of $V$ spanned by irreducible subrepresentations of weight $k$ lies in the $\mathbb{Q}[\mathrm{Sp}(H)]$-span of the tensor
$$p(a_1\otimes\cdots\otimes a_{k}).$$.
\end{corollary}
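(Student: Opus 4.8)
The plan is to deduce this directly from the preceding lemma together with the fact that the category of algebraic $\mathrm{Sp}(H)$-representations is semisimple. First I would recall that for any finite-dimensional algebraic $\mathrm{Sp}(H)$-representation $U$, its decomposition into isotypic components is canonical; in particular the span $U^{\langle k\rangle}$ of the weight-$k$ irreducible subrepresentations of $U$ is a direct summand, and every $\mathrm{Sp}(H)$-equivariant map $U\to U'$ carries $U^{\langle k\rangle}$ into $(U')^{\langle k\rangle}$. Applied to $U=H^{\otimes k}$, the weight-$k$ part is exactly $H^{\langle k\rangle}$ as defined above, since $H^{\langle k\rangle}$ is by definition the sum of the top-weight (i.e. weight $k$) irreducibles occurring in $H^{\otimes k}$.

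Next I would identify $p(H^{\langle k\rangle})$. Writing $H^{\otimes k}=V\oplus V'$ as $\mathrm{Sp}(H)$-representations, so that $p$ is the projection onto $V$ along $V'$, and taking weight-$k$ parts using the functoriality just mentioned, we get $H^{\langle k\rangle}=V^{\langle k\rangle}\oplus (V')^{\langle k\rangle}$, where $V^{\langle k\rangle}\subseteq V$ is the span of the weight-$k$ irreducible subrepresentations of $V$ — precisely the subspace appearing in the statement — and likewise for $V'$. Since $p$ restricts to the identity on $V^{\langle k\rangle}$ and to zero on $(V')^{\langle k\rangle}$, this yields $p(H^{\langle k\rangle})=V^{\langle k\rangle}$.

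Finally, the preceding lemma says that $H^{\langle k\rangle}$ is generated as a $\mathbb{Q}[\mathrm{Sp}(H)]$-module by $a_1\otimes\cdots\otimes a_k$ (this tensor indeed lies in $H^{\langle k\rangle}$, being traceless because $\omega(a_i,a_j)=0$ for all $i,j$). Since $p$ is $\mathrm{Sp}(H)$-equivariant, it carries the $\mathbb{Q}[\mathrm{Sp}(H)]$-span of $a_1\otimes\cdots\otimes a_k$ onto the $\mathbb{Q}[\mathrm{Sp}(H)]$-span of $p(a_1\otimes\cdots\otimes a_k)$, and combining with the previous paragraph we conclude that $V^{\langle k\rangle}$ equals — in particular lies in — the $\mathbb{Q}[\mathrm{Sp}(H)]$-span of $p(a_1\otimes\cdots\otimes a_k)$. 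I do not expect any serious obstacle here; the only point that deserves to be stated carefully is that the passage to the weight-$k$ isotypic part is functorial and compatible with direct sum decompositions, so that all weight-$k$ irreducibles of $H^{\otimes k}$ (hence of $V$) already sit inside $H^{\langle k\rangle}$ — everything else is a formal consequence of equivariance of $p$ and the lemma.
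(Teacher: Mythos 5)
Your argument is correct and is exactly the reasoning the paper leaves implicit when it calls the statement an ``immediate corollary'' of the lemma: equivariance of $p$, canonicity of the weight-$k$ isotypic part (so that $p(H^{\langle k\rangle})=V^{\langle k\rangle}$), and the lemma's generation statement for $H^{\langle k\rangle}$. No gap; you in fact prove the slightly stronger equality $V^{\langle k\rangle}=\mathbb{Q}[\mathrm{Sp}(H)]\cdot p(a_1\otimes\cdots\otimes a_k)$.
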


\section{Abelian cycles and bounding pair maps} \label{ACandBPM}

Computing the $n$-fold coproduct of a general class $\alpha\in H_n(\mathcal{I}_{g}^1)$ is difficult, but we will restrict our interest to a simple class of homology classes, called \textit{abelian cycles}, for which we can give an explicit expression. For a group $G$, abelian cycles in $H_n(G)$ are constructed from commuting elements in the group itself, so in order to construct such classes for $\mathcal{I}_{g}^1$ in particular, we will first recall some basic notions from the theory of mapping class groups.

\subsection{Dehn twists and bounding pair maps}

Firstly, recall that if $\alpha$ is a simple, closed curve in $S_{g,r}^s$, then a \textit{Dehn twist} along $\alpha$ is a diffeomorphism of $S_{g,r}^s$ that is obtained as follows: start with some tubular neighborhood $N$ around $\alpha$, with a diffeomorphism $N\cong S^1\times I$. The corresponding Dehn twist is then given by the identity outside of $N$, and on $N\cong S^1\times I$ by $(s,t)\mapsto (se^{i2\pi t},t)$. Pictorially, we can illustrate this as in Figure \ref{figDehn}. 
\begin{figure}[htbp]
  \centering
  \includegraphics[scale=0.7]{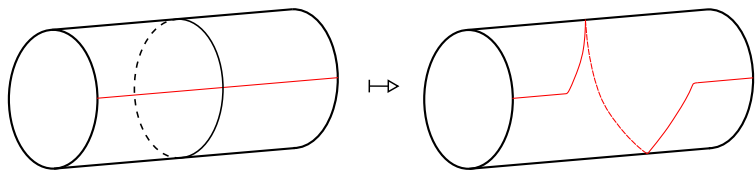}
  \caption{A Dehn twist.}
  \label{figDehn}
\end{figure}

Up to isotopy, the choice of tubular neighborhood does not matter, and if $\alpha$ and $\alpha'$ are homotopic, the corresponding Dehn twists are isotopic. Given a homotopy class $\alpha$ of simple, closed curves in $S_{g,r}^s$, it is thus well-defined to talk about \textit{the} Dehn twist along $\alpha$ as an element of $\Gamma_{g,r}^s$. We will denote this element by $T_{\alpha}$. It is a classic result by Dehn, and later independently by Lickorish, that $\Gamma_{g,r}^s$ is generated by Dehn twists. 

Dehn twists themselves are generally not elements of $\mathcal{I}_{g,r}^s$, but it is easy to find compositions of Dehn twists which are. A \textit{bounding pair} in $S_{g,r}^s$ is a pair of simple, closed, non-separating and homologous curves. If $(\alpha,\beta)$ is a bounding pair, we call the composition $T_\alpha T_{\beta^{-1}}$ the \textit{bounding pair map} corresponding to this bounding pair. The typical form of a bounding pair is illustrated in Figure \ref{figBP}. 

\begin{figure}[htbp]
  \centering
  \includegraphics[scale=0.6]{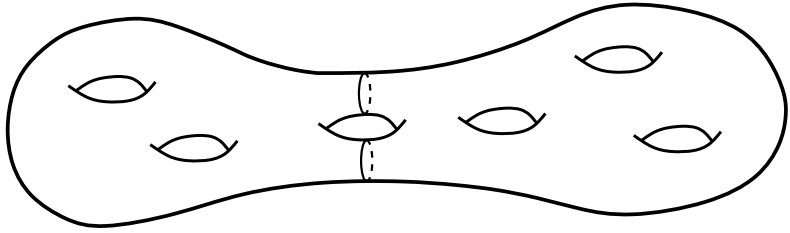}
  \caption{A typical bounding pair.}
  \label{figBP}
\end{figure}

Bounding pair maps are elements of $\mathcal{I}_{g,r}^s$ and in fact, Johnson proved that $\mathcal{I}_{g}$ is generated by a finite number of bounding pair maps, for $g\ge 3$ \cite{JohnsonFiniteGeneration}. This was also used to prove that $\tau:H_1(\mathcal{I}_{g}^1)\to \bigwedge^3 H$ is an isomorphism in this range.

Any bounding pair $(\alpha,\beta)$ separates $S_{g}^1$ into two connected components, a subsurface of genus $g_1$ with three boundary components and a genus $g_2$ subsurface with two boundary components, such that $g_1+g_2=g-1$. Let $\omega_2=\sum_{i=1}^{g_2}a_i\wedge b_i$ be the symplectic form of a maximal subsurface with only one boundary component of the connected component not containing the original boundary component. 
\begin{proposition}[Johnson]\label{tauformula}
Let $(\alpha,\beta)$ and $\omega_2$ be as above and $f$ denote the corresponding bounding pair. Then
$$\tau(f)=\omega_2\wedge[\alpha]=\sum_{i=1}^{g_2}(a_i\wedge b_i)\wedge[\alpha]\in\bigwedge^3 H,$$
where $[\alpha]$ denotes the homology class of $\alpha$.
\end{proposition}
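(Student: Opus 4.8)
The plan is to compute $\tau(f)$ directly from the construction of the Johnson homomorphism and the definition of the bounding pair map. Fix a basepoint $*$ on $\partial S_g^1$ and write $\pi:=\pi_1(S_g^1,*)$, which is free of rank $2g$; let $\pi=\Gamma_1\pi\supseteq\Gamma_2\pi\supseteq\Gamma_3\pi$ be the lower central series, so that $\Gamma_1\pi/\Gamma_2\pi\cong H_{\mathbb{Z}}$ and $\Gamma_2\pi/\Gamma_3\pi\cong\bigwedge^2 H_{\mathbb{Z}}$. Recall that for $\varphi\in\mathcal{I}_g^1$ the value $\tau_J(\varphi)$ is defined from the fact that $\varphi$ acts trivially on $H_{\mathbb{Z}}$: the assignment $x\mapsto\varphi(x)x^{-1}$ then takes values in $\Gamma_2\pi$ and descends to a homomorphism $H_{\mathbb{Z}}\to\bigwedge^2 H_{\mathbb{Z}}$, which Johnson shows corresponds, under the symplectic duality $H_{\mathbb{Z}}\cong H_{\mathbb{Z}}^*$, to an element of the subspace $\bigwedge^3 H_{\mathbb{Z}}\subseteq H_{\mathbb{Z}}\otimes\bigwedge^2 H_{\mathbb{Z}}$. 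Since $\tau$ is $\tau_J$ after tensoring with $\mathbb{Q}$, it is enough to identify this element.

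First I would reduce to the case $g_2=1$. The homomorphism $\tau_J$ is equivariant for the conjugation action of $\Gamma_g^1$ on $\mathcal{I}_g^1$ and the induced action of $\mathrm{Sp}(H_{\mathbb{Z}})$ on $\bigwedge^3 H_{\mathbb{Z}}$ — note that conjugation by an inner automorphism of $\pi$ acts trivially on $\Gamma_2\pi/\Gamma_3\pi$, which is central in $\pi/\Gamma_3\pi$ — and the right-hand side $\omega_2\wedge[\alpha]$ transforms the same way under a change of symplectic basis. By the change-of-coordinates principle all genus-one bounding pair maps lie in a single $\Gamma_g^1$-orbit, so it suffices to check the identity on one explicit genus-one example.

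For that example I would take $\alpha$ and $\beta$ to be disjoint non-separating curves with $[\alpha]=[\beta]$ cobounding a genus-one subsurface whose handle carries the first symplectic pair $\{a_1,b_1\}$, arranged so that $[\alpha]=a_2$ for a suitable symplectic basis of $H_{\mathbb{Z}}$, and then compute the action of $T_\alpha T_\beta^{-1}$ on a free generating set of $\pi$ via the standard formula for the action of a Dehn twist on based loops. The key input is that the based loop $\alpha\beta^{-1}$ lies in $\Gamma_2\pi$ because $\alpha$ and $\beta$ are homologous, and that its class in $\Gamma_2\pi/\Gamma_3\pi\cong\bigwedge^2 H_{\mathbb{Z}}$ is $a_1\wedge b_1$, the class of the cobounded subsurface; carrying this through the Dehn twist formulas and the symplectic duality yields $\tau_J(T_\alpha T_\beta^{-1})=a_1\wedge b_1\wedge a_2=\omega_2\wedge[\alpha]$. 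For general $g_2$ I would then write $f$ as a telescoping product $f=\prod_{i=1}^{g_2}\bigl(T_{\delta_{i-1}}T_{\delta_i}^{-1}\bigr)$ with $\delta_0=\alpha$, $\delta_{g_2}=\beta$, the $\delta_i$ mutually disjoint curves in the genus-$g_2$ piece, each homologous to $\alpha$ and chosen so that consecutive ones cobound genus-one subsurfaces realizing the handles $\{a_i,b_i\}$; since $\tau_J$ is a group homomorphism, summing the genus-one values gives $\tau_J(f)=\sum_{i=1}^{g_2}a_i\wedge b_i\wedge[\alpha]=\omega_2\wedge[\alpha]$.

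The main obstacle is the genus-one computation: one must be careful with basepoints and conjugation when writing down the action of $T_\alpha$ and $T_\beta$ on $\pi$ (a Dehn twist acts on the based fundamental group only up to inner automorphism, which is exactly why one passes to the nilpotent quotient), check that the commutator corrections have the asserted classes in $\Gamma_2\pi/\Gamma_3\pi$, and keep track of the signs introduced by the symplectic duality $H_{\mathbb{Z}}\cong H_{\mathbb{Z}}^*$. A secondary point to spell out is that the telescoping factorization above is genuinely a product of bounding pair maps with the stated homology classes and genus splittings, which again follows from the change-of-coordinates principle.
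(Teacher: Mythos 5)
The paper does not actually prove Proposition \ref{tauformula}: it is quoted as a theorem of Johnson and used as a black box, so there is no in-paper argument to measure your proposal against. That said, your sketch is a faithful outline of how Johnson's computation goes, and the reduction steps are all sound: $\tau_J$ is $\Gamma_g^1$-equivariant for exactly the reason you give (inner automorphisms act trivially on the central quotient $\Gamma_2\pi/\Gamma_3\pi$), the change-of-coordinates principle does put all bounding pairs of a fixed genus type into a single $\Gamma_g^1$-orbit, and the telescoping factorization $T_\alpha T_{\beta}^{-1}=\prod_{i=1}^{g_2}T_{\delta_{i-1}}T_{\delta_i}^{-1}$ through mutually disjoint curves $\delta_i$ homologous to $\alpha$ is a legitimate identity in the group whose factors are genus-one bounding pair maps, each of whose complementary piece away from $\partial S_{g}^1$ carries exactly the handle $\{a_i,b_i\}$; additivity of $\tau_J$ then gives $\sum_i a_i\wedge b_i\wedge[\alpha]=\omega_2\wedge[\alpha]$.

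The one place where the sketch understates what must be done is the genus-one computation, which is where all the content lives. If the only correction term you record is $\varphi(x)x^{-1}=\alpha\beta^{-1}\equiv a_1\wedge b_1$ for the single generator $x$ dual to $[\alpha]=a_2$, the resulting element of $H_{\mathbb{Z}}\otimes\bigwedge^2H_{\mathbb{Z}}$ is $a_2\otimes(a_1\wedge b_1)$, which is not alternating and does not lie in $\bigwedge^3H_{\mathbb{Z}}$. The based representatives of $a_1$ and $b_1$ cannot be chosen disjoint from $\alpha\cup\beta$ (they must travel from the basepoint on $\partial S_g^1$ into the cobounded handle and back), so they also acquire corrections, of conjugation type, whose classes in $\Gamma_2\pi/\Gamma_3\pi$ are of the form $[\alpha]\wedge a_1$ and $[\alpha]\wedge b_1$; these supply precisely the remaining two terms of the image of $a_1\wedge b_1\wedge a_2$ under $\bigwedge^3H_{\mathbb{Z}}\hookrightarrow H_{\mathbb{Z}}\otimes\bigwedge^2H_{\mathbb{Z}}$. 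So "the key input" is not only the class of $\alpha\beta^{-1}$ but equally these basepoint-conjugation terms; you flag the issue in your final paragraph, but the write-up should make clear that without them the element would not even land in $\bigwedge^3H_{\mathbb{Z}}$. With that computation carried out (and the orientation convention tying the sign of the twist to the sign of $[\alpha]$ fixed once and for all), your argument is complete and is essentially Johnson's.
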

Note that since $\alpha$ and $\beta$ belong to the same homology class, this formula is well-defined.

\subsection{Abelian cycles} If two bounding pairs are disjoint, the corresponding bounding pair maps commute. This gives us a good way to construct elements in $H_n(\mathcal{I}_{g}^1)$. If $G$ is a group and $(f_1,\ldots,f_n)$ is an $n$-tuple of pairwise commuting elements in $G$, this determines a map $\mathbb{Z}^n\to G$, which induces a map $H_n(\mathbb{Z}^n)\to H_n(G)$. We have $H_n(\mathbb{Z}^n)\cong H_n(T^n)$, where $T^n$ denotes the $n$-torus, and hence $H_n(\mathbb{Z}^n)$ is generated by its fundamental class. We call the image of this class under the map to $H_n(G)$ the \textit{abelian cycle} determined by $(f_1,\ldots,f_n)$ and denote it by $A(f_1,\ldots,f_n)$.

\begin{lemma}\label{lemmaComult}
If $\Delta^{n-1}:H_n(G)\to\bigwedge^n H_1(G)$ is the comultiplication map, then 
$$\Delta^{n-1} A(f_1,\ldots,f_n)=[f_1]\wedge\cdots\wedge [f_n],$$
where $[f]\in H_1(G)$ denotes the homology class of $f\in G$. 
\end{lemma}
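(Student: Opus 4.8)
The plan is to reduce to the universal example $G=\mathbb{Z}^n$ by naturality of the comultiplication, and then to compute directly in the homology of the torus. By definition $A(f_1,\dots,f_n)=\phi_*[T^n]$, where $\phi\colon\mathbb{Z}^n\to G$ is the homomorphism sending the $i$-th standard generator $e_i$ to $f_i$, and $[T^n]\in H_n(\mathbb{Z}^n)\cong H_n(T^n)$ is the fundamental class. The map $\Delta^{n-1}$ is induced by the $(n-1)$-fold iterated diagonal, which is natural in the group (and graded-cocommutative, since the diagonal is symmetric, which is why its image lands in $\bigwedge^n H_1$), so the square
\[
\begin{tikzcd}
H_n(\mathbb{Z}^n)\arrow[r,"\Delta^{n-1}"]\arrow[d,"\phi_*"'] & \bigwedge^n H_1(\mathbb{Z}^n)\arrow[d,"\bigwedge^n\phi_*"]\\
H_n(G)\arrow[r,"\Delta^{n-1}"'] & \bigwedge^n H_1(G)
\end{tikzcd}
\]
commutes. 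Since $\phi_*[e_i]=[f_i]$ in $H_1(G)$, it then remains to show $\Delta^{n-1}[T^n]=[e_1]\wedge\cdots\wedge[e_n]$ in $\bigwedge^n H_1(T^n)$, as $\bigwedge^n\phi_*$ takes this to $[f_1]\wedge\cdots\wedge[f_n]$.

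For the torus computation I would use the standard Hopf algebra structure on $H_*(T^n;\mathbb{Q})$. With the Pontryagin product coming from the group law, $H_*(T^n)\cong\bigwedge H_1(T^n)$ is free graded-commutative on $H_1(T^n)$ (concentrated in degree $1$), and the fundamental class is the top product $[T^n]=[e_1]\cdots[e_n]$. The coproduct $\Delta\colon H_*(T^n)\to H_*(T^n)^{\otimes 2}$ is an algebra map (the diagonal is a group homomorphism), and on $H_1$ it is $v\mapsto v\otimes 1+1\otimes v$, i.e.\ $H_1(T^n)$ consists of primitives — this follows from the corresponding fact for $S^1$ via the Künneth isomorphism. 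Iterating, $\Delta^{n-1}\colon H_*(T^n)\to H_*(T^n)^{\otimes n}$ is an algebra map with $\Delta^{n-1}[e_i]=\sum_{j=1}^{n}1^{\otimes(j-1)}\otimes[e_i]\otimes 1^{\otimes(n-j)}$, so
\[
\Delta^{n-1}[T^n]=\prod_{i=1}^{n}\Big(\sum_{j=1}^{n}1^{\otimes(j-1)}\otimes[e_i]\otimes 1^{\otimes(n-j)}\Big).
\]
Expanding this product with the Koszul sign rule, the only monomials landing in the summand $H_1(T^n)^{\otimes n}$ are those in which each tensor slot receives exactly one $[e_i]$; these are indexed by $\sigma\in\Sigma_n$, and their signs assemble to $\sum_{\sigma\in\Sigma_n}\mathrm{sgn}(\sigma)\,[e_{\sigma(1)}]\otimes\cdots\otimes[e_{\sigma(n)}]=[e_1]\wedge\cdots\wedge[e_n]$, which is exactly $\Delta^{n-1}[T^n]\in\bigwedge^n H_1(T^n)$. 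Together with the commutative square above, this proves the lemma.

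The argument is essentially sign- and convention-bookkeeping, so the one place to be careful — and the closest thing to an obstacle — is matching the output of the product expansion with the chosen normalization of $[e_1]\wedge\cdots\wedge[e_n]$ and tracking Koszul signs when multiplying the primitives. If one prefers to avoid the explicit expansion, graded-cocommutativity of $\Delta$ already forces the $H_1^{\otimes n}$-component of $\Delta^{n-1}[T^n]$ to be graded-antisymmetric, hence a scalar multiple of $[e_1]\wedge\cdots\wedge[e_n]$; the scalar is then pinned down by post-composing the iterated diagonal $T^n\to(T^n)^n$ with the $n$ coordinate projections $(T^n)^n\to(S^1)^n$, whose composite is the identity of $T^n$.
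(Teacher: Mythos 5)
Your proposal is correct and follows essentially the same route as the paper: reduce to $\mathbb{Z}^n$ by naturality of the iterated coproduct, then use the bialgebra structure of $H_*(T^n)$ (primitivity of $H_1$ and multiplicativity of $\Delta$) to identify $\Delta^{n-1}$ of the fundamental class with $[e_1]\wedge\cdots\wedge[e_n]$. The only difference is that you spell out the expansion of $\prod_i\Delta^{n-1}[e_i]$ and the Koszul-sign bookkeeping, which the paper compresses into a single asserted line.
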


\begin{proof}
The homology $H_*(\mathbb{Z}^n)$ is a bialgebra and if $\{a_1,\ldots,a_n\}$ is a basis for $H_1(\mathbb{Z}^n)$, the fundamental class of $H_n(\mathbb{Z}^n)$ is $a_1\cdots a_n$. Thus 
$$\Delta^{n-1}(a_1\cdots a_n)=a_1\wedge\cdots\wedge a_n.$$
Furthermore, the map $H_1(\mathbb{Z}^n)\to H_1(G)$ maps $a_i\mapsto [f_i]$, for $1\le i\le n$. By functoriality of homology, we have a commutative diagram
\[\begin{tikzcd}
H_n(\mathbb{Z}^n)\arrow[r]\arrow[d]& H_n(G)\arrow[d]\\
\bigwedge^n H_1(\mathbb{Z}^n)\arrow[r]& \bigwedge^n H_1(G)
\end{tikzcd}\]
which means that 
\begin{align*}
    \Delta^{n-1} A(f_1,\ldots,f_n)=[f_1]\wedge\cdots\wedge[f_n].& \qedhere
\end{align*}
\end{proof}

\begin{figure}[htbp]
  \centering
  \includegraphics[scale=0.4]{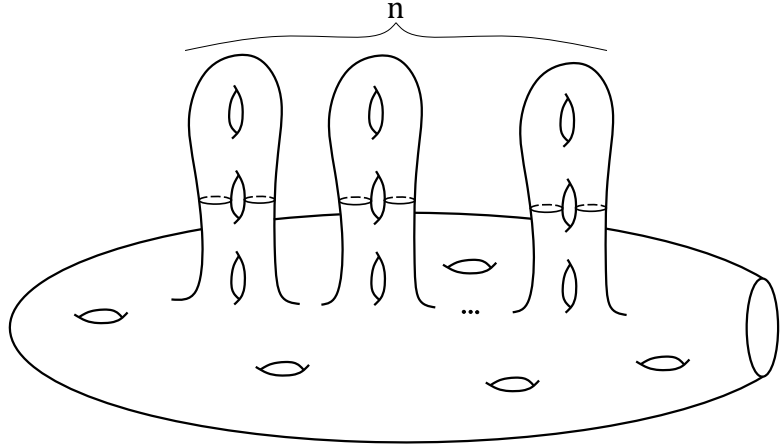}
  \caption{Bounding pairs determining an abelian cycle in $H_n(\mathcal{I}_{g}^1)$, for $g\ge 3n$.}
  \label{figAC}
\end{figure}

If $(f_1,\ldots,f_n)$ is an $n$-tuple of bounding pairs corresponding to pairwise disjoint bounding pairs, we will simply call this an $n$-tuple of disjoint bounding pair maps. We will consider abelian cycles in $H_n(\mathcal{I}_{g,r}^s)$ that are determined by such $n$-tuples and denote the subrepresentation of $H_n(\mathcal{I}_{g,r}^s)$ generated by these by $A_n(\mathcal{I}_{g,r}^s)$. 

\begin{remark}
It is clear from the definition of bounding pair map that the map $\mathcal{I}_g^1\to\mathcal{I}_{g,1}$ induces a map $A_n(\mathcal{I}_g^1)\to A_n(\mathcal{I}_{g,1})$.
\end{remark}

If $A(f_1,\ldots,f_n)\in H_n(\mathcal{I}_g^1)$ is an abelian cycle, it follows by Lemma \ref{lemmaComult} that
$$\psi_n(A(f_1,\ldots,f_n))=\tau([f_1])\wedge\cdots\wedge\tau([f_n]).$$
To evaluate $\psi_n$ on an abelian cycle, this means that we only need Johnson's original result on the image of $\tau$. 

\begin{example}\label{exampleBPs} Consider the abelian cycle $A$ determined by the bounding pairs in Figure \ref{figAC}. With an appropriate choice of symplectic basis of $H$, we see that the image of this abelian cycle is
$$\psi_n(A)=(a_1\wedge b_1\wedge a_3)\wedge\cdots\wedge (a_{3n-2}\wedge b_{3n-2}\wedge a_{3n}).$$
\end{example}

We can now use this to prove Theorem \ref{theorem1}:

\begin{proof}[Proof of Theorem \ref{theorem1}]
Since $\bigwedge^n\left(\bigwedge^3 H\right)$ is a direct summand of $H^{\otimes 3n}$, it follows by Corollary \ref{Corollary:Main} that it suffices to show that the tensor
$$(a_1\wedge a_2\wedge a_3)\wedge \cdots\wedge (a_{3n-2}\wedge a_{3n-1}\wedge a_{3n})$$
lies in the image of $\psi_n$. To this end we consider the abelian cycle from Example \ref{exampleBPs}. For $i\neq j\in\{1,\ldots,g\}$ let $T_{i,j}:H\to H$ denote the linear map defined by mapping $b_i\mapsto b_i+a_j$, $b_j\mapsto b_j+a_i$ and mapping all other basis vectors to themselves. Each such map lies in $\mathrm{Sp}(H)$ and we see that if we act with $\prod_{i=1}^{n}(T_{3i-2,3i-1}-\mathbf{1})\in\mathbb{Q}[\mathrm{Sp}(H)]$ on
$$(a_1\wedge b_1\wedge a_3)\wedge\cdots\wedge (a_{3n-2}\wedge b_{3n-2}\wedge a_{3n}),$$
we get
\begin{align*}
    (a_1\wedge a_2\wedge a_3)\wedge\cdots\wedge (a_{3n-2}\wedge a_{3n-1}\wedge a_{3n}). & \qedhere
\end{align*}
\end{proof}

\section{Proof of Theorem \ref{theorem2}}\label{firstproofsection}

Let us start by restating Theorem \ref{theorem2}:

\theoremtwo*

The idea for proving this is essentially to consider two simple kinds of abelian cycles in $H_n(\mathcal{I}_{g,1})$, whose respective images have components in $V_{1^{n+2}}$ and $V_{1^{n}}$ and then taking \textit{products} of such cycles, for a range of $n$'s corresponding to the partitions. The special case of Theorem \ref{theorem1} is given by taking the $n$th power of a bounding pair in $H_1(\mathcal{I}_{3}^1)$ whose image has a component in $V_{1^3}$ (see Example \ref{ExampleSigma1} below).

\subsection{Products of abelian cycles} 

\begin{figure}[htbp]
  \centering
  \includegraphics[scale=0.35]{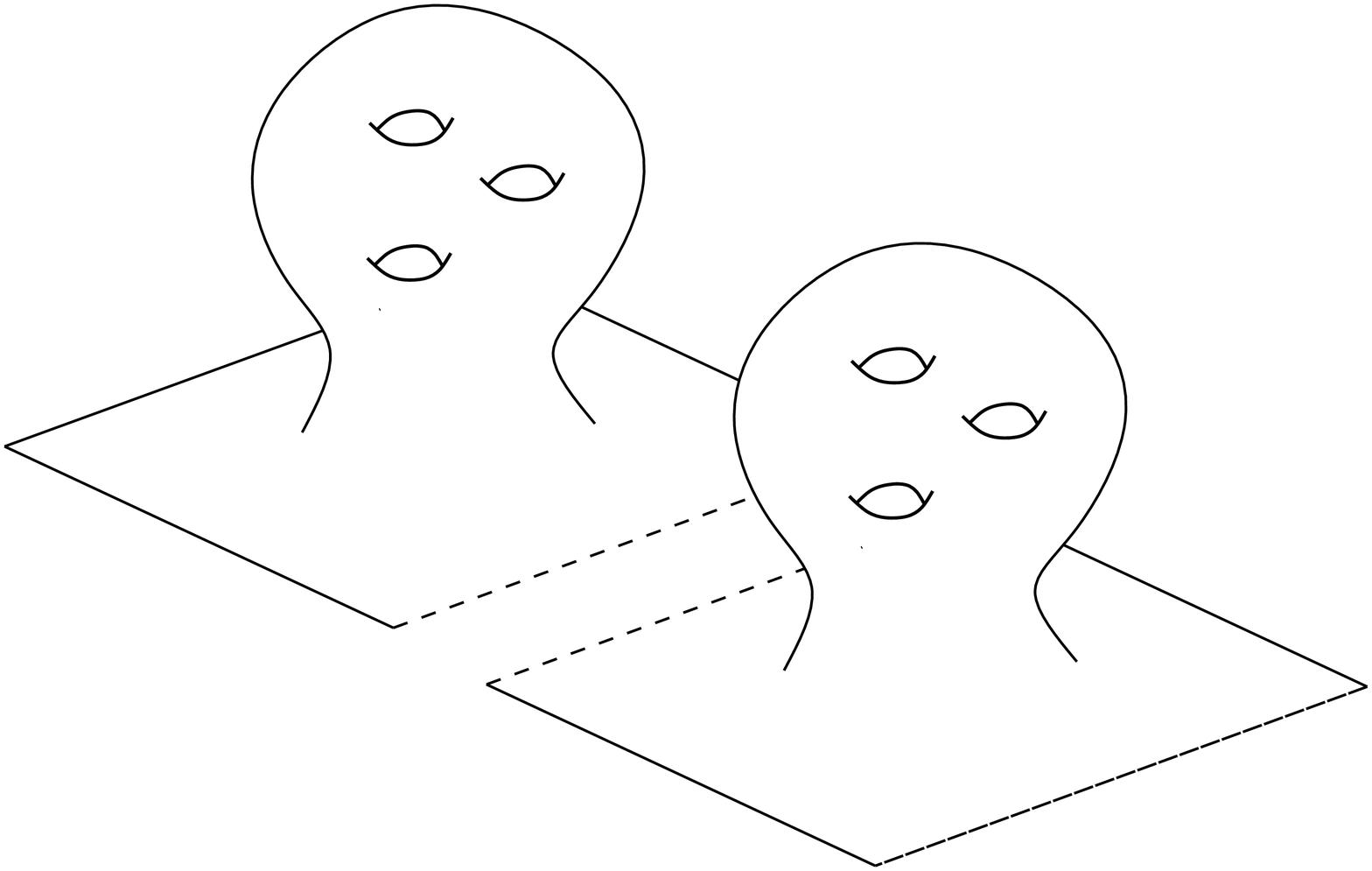}
  \caption{}
  \label{figGlue}
\end{figure}

Given mapping classes $f_1\in\mathcal{I}_{g_1}^1$ and $f_2\in\mathcal{I}_{g_2}^1$, we may glue the surfaces $S_{g_1}^1$ and $S_{g_2}^1$ along the dashed segments illustrated in Figure \ref{figGlue}, and since both mapping classes fix the boundaries, this naturally gives us a mapping class in $\mathcal{I}_{g_1+g_2}^1$. Thus we get a map $\mathcal{I}_{g_1}^1\times\mathcal{I}_{g_2}^1\to\mathcal{I}_{g_1+g_2}^1$, which induces a multiplication
$$H_i(\mathcal{I}_{g_1}^1)\otimes H_j(\mathcal{I}_{g_2}^1)\to H_{i+j}(\mathcal{I}_{g_1+g_2}^1),$$
by precomposing with the Künneth map. This makes the space $\bigoplus_{g,n\ge 0} H_n(\mathcal{I}_{g}^1)$ into a bigraded associative and commutative algebra. 

In $H_{i+j}(\mathbb{Z}^{i+j})\cong H_{i}(\mathbb{Z}^i)\otimes H_j(\mathbb{Z}^j)$, the fundamental class is the tensor product of the fundamental classes in $H_{i}(\mathbb{Z}^i)$ and $H_{j}(\mathbb{Z}^j)$.  This proves the following proposition:

\begin{proposition}\label{PropProdAC}
If $A(f_1,\ldots,f_i)$ and $A(f_{i+1},\cdots,f_{i+j})$ are abelian cycles in $H_i(\mathcal{I}_{g_1}^1)$ and $H_j(\mathcal{I}_{g_2}^1)$ respectively, their product is the abelian cycle $A(f_1,\ldots,f_{i+j})\in H_{i+j}(\mathcal{I}_{g_1+g_2}^1)$.
\end{proposition}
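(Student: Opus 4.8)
The plan is to unpack both sides of the claimed identity directly from the definitions and to assemble them using the naturality of the Künneth map. Recall that an abelian cycle $A(f_1,\ldots,f_i) \in H_i(\mathcal{I}_{g_1}^1)$ is, by definition, the image of the fundamental class $[T^i] \in H_i(\mathbb{Z}^i)$ under the map induced by the homomorphism $\mathbb{Z}^i \to \mathcal{I}_{g_1}^1$ sending the standard generators to $f_1,\ldots,f_i$ (these commute since the corresponding bounding pairs are disjoint, and disjointness is preserved after gluing, so $f_1,\ldots,f_{i+j}$ still pairwise commute in $\mathcal{I}_{g_1+g_2}^1$). So first I would set up the commuting diagram of groups
\[
\begin{tikzcd}
\mathbb{Z}^i \times \mathbb{Z}^j \arrow[r] \arrow[d] & \mathcal{I}_{g_1}^1 \times \mathcal{I}_{g_2}^1 \arrow[d] \\
\mathbb{Z}^{i+j} \arrow[r] & \mathcal{I}_{g_1+g_2}^1,
\end{tikzcd}
\]
where the top horizontal map is $(f_1,\ldots,f_i) \times (f_{i+1},\ldots,f_{i+j})$, the left vertical map is the canonical isomorphism $\mathbb{Z}^i \times \mathbb{Z}^j \cong \mathbb{Z}^{i+j}$, the right vertical map is the gluing homomorphism of Figure \ref{figGlue}, and the bottom map is $(f_1,\ldots,f_{i+j})$. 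This diagram commutes because gluing the generator $f_k$ (supported in one of the two subsurfaces) into $S_{g_1+g_2}^1$ and extending by the identity agrees, on the nose, with the homomorphism $\mathbb{Z}^{i+j} \to \mathcal{I}_{g_1+g_2}^1$ defined by the glued-in bounding pairs.

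Next I would apply homology together with the Künneth/cross-product map, using its naturality. On the top row, $H_i(\mathbb{Z}^i) \otimes H_j(\mathbb{Z}^j) \to H_{i+j}(\mathbb{Z}^i \times \mathbb{Z}^j) \cong H_{i+j}(\mathbb{Z}^{i+j})$ sends $[T^i] \otimes [T^j]$ to $[T^{i+j}]$ — this is exactly the statement, recalled in the paragraph preceding the proposition, that the fundamental class of the $(i+j)$-torus is the cross product of the fundamental classes of the two factor tori (equivalently, in the bialgebra $H_*(\mathbb{Z}^{i+j})$, $[T^{i+j}] = a_1 \cdots a_{i+j}$ factors as the product $(a_1\cdots a_i)(a_{i+1}\cdots a_{i+j})$). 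Chasing $[T^i]\otimes[T^j]$ around the resulting commuting square, the composite down-then-across gives precisely the product $A(f_1,\ldots,f_i) \cdot A(f_{i+1},\ldots,f_{i+j})$, since by definition the product on $\bigoplus_{g,n} H_n(\mathcal{I}_g^1)$ is "cross product followed by the map induced by gluing"; and the composite across-then-down gives the image of $[T^{i+j}]$ under $H_{i+j}(\mathbb{Z}^{i+j}) \to H_{i+j}(\mathcal{I}_{g_1+g_2}^1)$, which is by definition $A(f_1,\ldots,f_{i+j})$. Equating the two composites proves the proposition.

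There is essentially no hard analytic content here; the only point requiring a little care — and the step I would treat as the "main obstacle" — is checking that the gluing homomorphism genuinely makes the square of \emph{groups} commute, i.e.\ that the gluing construction of Figure \ref{figGlue} is compatible with composition and sends the product mapping class $(f_1\cdots f_i, f_{i+1}\cdots f_{i+j})$ to the mapping class one obtains by realizing all the (glued, still pairwise disjoint) bounding pairs inside the single surface $S_{g_1+g_2}^1$. This is the geometric input: each $f_k$ is supported in an embedded subsurface disjoint from the gluing region, so "glue then multiply" equals "multiply then glue", and the images of disjoint bounding pairs under the inclusions $S_{g_1}^1 \hookrightarrow S_{g_1+g_2}^1$ and $S_{g_2}^1 \hookrightarrow S_{g_1+g_2}^1$ remain disjoint. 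Once this compatibility is in hand, everything else is formal naturality of the Künneth map plus the computation of the torus fundamental class under the cross product.
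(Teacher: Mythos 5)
Your proof is correct and follows the same route as the paper: the paper's entire argument is the observation that the fundamental class of $H_{i+j}(\mathbb{Z}^{i+j})\cong H_i(\mathbb{Z}^i)\otimes H_j(\mathbb{Z}^j)$ is the tensor product of the two fundamental classes, combined with the definition of the product as K\"unneth followed by the gluing-induced map. You have simply made explicit the commuting square of groups and the naturality of the cross product that the paper leaves implicit.
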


\begin{example}\label{ExampleSigma1}
With this in mind, we see that the abelian cycle depicted in Figure \ref{figAC} is simply the product $\sigma_1^n\cdot 1_{g-3n}$, where $\sigma_1$ is the class in $H_1(\mathcal{I}_{3}^1)$ determined by the bounding pair in Figure \ref{figSigma1}, and $1_{g-3n}\in H_0(\mathcal{I}_{g-3n}^1)$ is the generator. 
\end{example}

\begin{figure}[htbp]
  \centering
  \includegraphics[scale=0.37]{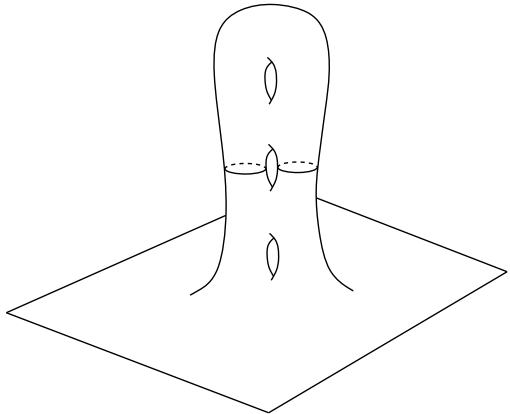}
  \caption{The cycle $\sigma_1$.}\label{figSigma1}
\end{figure}

\subsection{Families of truly nested bounding pairs} With Example \ref{ExampleSigma1} in mind, we will consider abelian cycles determined by families of bounding pairs of the form depicted in Figure \ref{figSigmaN} and then their products. We call these families of bounding pairs \textit{truly nested}, a terminology that we borrow from \cite{Church-Farb-Torelli}.

\begin{definition}\label{defTrulyNested}
    A family $(\alpha_1,\beta_1),\ldots,(\alpha_n,\beta_n)$ of bounding pairs is truly nested if\begin{enumerate}
    \item the curves $\alpha_i$ are pairwise non-homologous and
    \item after possibly reordering the bounding pairs, cutting the surface along $(\alpha_j,\beta_j)$ separates the bounding pair $(\alpha_i,\beta_i)$ from the original boundary component whenever $i<j$.
\end{enumerate}
\end{definition}
We will always assume that the bounding pairs in such a family is numbered so that condition (2) in the definition is met. By induction it can be seen that a truly nested family of bounding pairs is always of the form depicted in Figure \ref{figSigmaN}, but possibly with holes between each adjacent couple of bounding pairs.

\begin{figure}[htbp]
  \centering
  \includegraphics[scale=0.4]{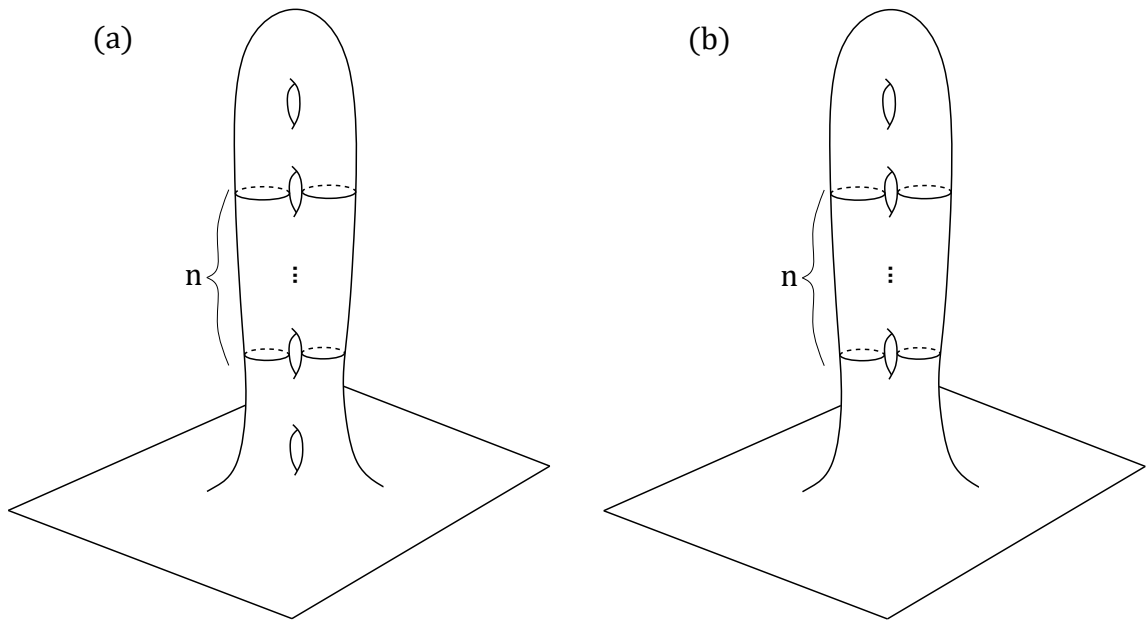}
  \caption{Truly nested families of bounding pairs defining the abelian cycles $\sigma_n\in H_n(\mathcal{I}_{n+2}^1)$ and $\rho_n\in H_n(\mathcal{I}_{n+1}^1)$ considered in Example \ref{exampleCF1}.}
  \label{figSigmaN}
\end{figure}

If we are given a truly nested family of bounding pairs $(\alpha_1,\beta_1),\ldots,(\alpha_n,\beta_n)$ and we cut the surface along a bounding pair, we can talk about the resulting connected component ``furthest'' from the original boundary component. If the surface is cut along $(\alpha_1,\beta_1)$, consider that component and let $\omega_0$ denote the symplectic form of a maximal subsurface with only one boundary component. In order to prove Theorem \ref{theorem2}, we will need the following lemma:

\begin{lemma}\label{lemmaTNF}
Let $A(f_1,\ldots,f_n)$ be an abelian cycle determined by a truly nested family of bounding pair maps. Then there exists an $\mathrm{Sp}(H)$-equivariant map $\Phi_n:\bigwedge^n\left(\bigwedge^3 H\right)\to\bigwedge^{n+2} H$, defined by a sequence of $n-1$ contractions, such that
\begin{equation}
    (\Phi_n\circ\psi_n)(A(f_1,\ldots,f_n))=\lambda_n\omega_0\wedge c_1\wedge\cdots\wedge c_n,\label{eqCFformula}
\end{equation}
for some non-zero integer $\lambda_n$, where $c_i$ is the homology class of the bounding pair corresponding to $f_i$ and $\omega_0$ is defined as above.
\end{lemma}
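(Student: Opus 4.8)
The plan is to reduce to Johnson's formula for $\tau$ together with the symplectic combinatorics of a truly nested family, and then to write $\Phi_n$ down explicitly. By Lemma \ref{lemmaComult}, $\psi_n(A(f_1,\ldots,f_n)) = \tau(f_1)\wedge\cdots\wedge\tau(f_n)$, so everything is controlled by the individual classes $\tau(f_j)$. Writing $c_j$ for the homology class of the $j$th bounding pair and $B_j$ for the component that $(\alpha_j,\beta_j)$ cuts off from the boundary component, Proposition \ref{tauformula} gives $\tau(f_j) = \omega_2^{(j)}\wedge c_j$ with $\omega_2^{(j)}$ the symplectic form of a maximal one-boundary subsurface of $B_j$; in particular $\omega_2^{(1)} = \omega_0$ by the definition of $\omega_0$ given just before the lemma.

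First I would use the nesting. Numbering the family as in Definition \ref{defTrulyNested}(2), the subsurfaces satisfy $B_1\subset B_2\subset\cdots\subset B_n$, so in a symplectic basis of $H$ adapted to this filtration each $\omega_2^{(j)}$ is a sum of hyperbolic pairs: the pairs spanning $\omega_0$, one new pair for each earlier bounding pair $k<j$ (the handle created once $(\alpha_k,\beta_k)$ becomes interior to $B_j$), and -- for a general truly nested family -- the pairs spanning any genus lying strictly between consecutive bounding pairs. The facts I want to record are: (i) $c_j$ is symplectically orthogonal to every class occurring in $\omega_2^{(j)}$, since $\alpha_j$ bounds $B_j$ and so can be pushed off any subsurface of $B_j$; (ii) the full self-contraction of $\omega_2^{(j)}$ is, up to a universal nonzero constant, the genus $g_2^{(j)}$ of that subsurface, and $g_2^{(j)}\ge g_2^{(1)}+(j-1)\ge 1$ by the nesting; (iii) the classes $c_1,\ldots,c_n$ are pairwise distinct and independent of the classes appearing in $\omega_0$ (this uses the bound on $g$ and Definition \ref{defTrulyNested}(1)).

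Then I would construct $\Phi_n$ as the composite $\bigwedge^n(\bigwedge^3 H)\hookrightarrow H^{\otimes 3n} \to H^{\otimes(n+2)} \twoheadrightarrow \bigwedge^{n+2}H$, where the middle map is the sequence of $n-1$ contractions that, for each $j=2,\ldots,n$, pairs the two ``symplectic-form slots'' of the $j$th block with each other. By (i)--(ii) this contraction sends the $j$th block $\tau(f_j)=\omega_2^{(j)}\wedge c_j$ to $g_2^{(j)}c_j$ -- all of the surplus genus of $\omega_2^{(j)}$, both the extra handles and any in-between genus, being swallowed into the scalar -- while $\tau(f_1)=\omega_0\wedge c_1$ is left intact. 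Reassembling, and using (iii) for nonvanishing of the surviving wedge, gives
\[
(\Phi_n\circ\psi_n)(A(f_1,\ldots,f_n)) = \lambda_n\,\omega_0\wedge c_1\wedge\cdots\wedge c_n,
\]
with $\lambda_n$ a nonzero integer (in fact positive, a sum of products of the genera $g_2^{(2)},\ldots,g_2^{(n)}$). That the contraction $\bigwedge^3 H\to H$ appearing here is the canonical $\mathrm{Sp}(H)$-equivariant one, so that $\Phi_n$ is well-defined and equivariant, is a short computation of the kind in Section \ref{SchurWeyl}.

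The main obstacle, I expect, is the second step together with the bookkeeping in the third: reading the hyperbolic-pair decomposition of $\omega_2^{(j)}$ off the truly nested condition -- especially the orthogonality $c_j\perp\omega_2^{(j)}$ and the self-contraction identity -- and then checking that the $n-1$ contractions really do act within their own blocks and leave no cross terms, so that the stated clean formula results rather than one polluted by in-between-genus terms. Once the genera are correctly tracked, $\lambda_n\ne 0$ is automatic.
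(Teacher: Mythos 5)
There is a genuine gap, and it sits exactly at the step you flagged as the main obstacle. Your $\Phi_n$ is built from \emph{within-block} self-contractions ("pair the two symplectic-form slots of the $j$th block with each other"), with the first block "left intact." But no linear map on $\bigwedge^n\bigl(\bigwedge^3 H\bigr)$ can treat the factors of a decomposable wedge asymmetrically: the inclusion $\bigwedge^n\bigl(\bigwedge^3 H\bigr)\hookrightarrow H^{\otimes 3n}$ sends $\tau(f_1)\wedge\cdots\wedge\tau(f_n)$ to the full (anti)symmetrization over which $\tau(f_i)$ occupies which block, so the block your contractions spare is not $\tau(f_1)$ but ranges over all $\tau(f_i)$. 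Carrying out your contractions on the symmetrized tensor gives, up to a global factorial,
$$\sum_{i=1}^{n}\Bigl(\prod_{k\neq i}g_2^{(k)}\Bigr)\,\omega_2^{(i)}\wedge c_1\wedge\cdots\wedge c_n ,$$
and here the claim that "any in-between genus is swallowed into the scalar" fails: for $i\ge 2$ one has $\omega_2^{(i)}=\omega_0+\sum_k c_k\wedge b_{i_k}+\sum_j a_j\wedge b_j$, where the last sum runs over hyperbolic pairs lying strictly between consecutive bounding pairs. The $c_k\wedge b_{i_k}$ terms do die against $c_1\wedge\cdots\wedge c_n$, but the in-between pairs $a_j\wedge b_j$ are independent of all the $c_k$ and survive, so the output is \emph{not} a multiple of $\omega_0\wedge c_1\wedge\cdots\wedge c_n$ for a general truly nested family (Definition \ref{defTrulyNested} explicitly allows such intermediate genus, and the lemma must cover it). Your argument only proves the formula in the special case where each bounding pair immediately follows the previous one, and even then with a constant depending on the genera $g_2^{(k)}$ rather than on $n$ alone.

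The paper's construction is designed precisely to avoid this: each $\phi_k^n$ is a \emph{cross-block} contraction, pairing one $H$-factor of the accumulated $\bigwedge^{k+2}H$ against one $H$-factor of the next $\bigwedge^3 H$. An in-between pair $a_j\wedge b_j$ occurring in $\omega_2^{(i)}$ is symplectically orthogonal to every class appearing in the earlier blocks, so it can never survive a cross contraction, and the only surviving pairings involve $\omega_0$ and the classes $c_k$ themselves. The identity \eqref{eqCFformula} is then verified by induction on $n$, expanding $\psi_n(A(f_1,\ldots,f_n))$ as $\sum_k\pm\psi_{n-1}(A(f_1,\ldots,\hat f_k,\ldots,f_n))\wedge\tau([f_k])$ and computing $\phi_{n-1}^n$ on each summand, which yields the universal recursion $\lambda_n=-\tfrac{(n+1)(n+2)}{2}\lambda_{n-1}$. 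To repair your proof you would either need to adopt such cross contractions, or restrict the lemma to families with no intermediate genus and check that this restricted statement still suffices for the proof of Theorem \ref{theorem2}.
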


We construct the map $\Phi_n$ as follows: For $n=1$ we set $\Phi_1=\mathrm{id}_{\bigwedge^3 H}$. For $n\ge 2$, we have a map $\phi_1^n:\bigwedge^n\left(\bigwedge^3 H\right)\to\bigwedge^4 H\otimes\bigwedge^{n-2}\left(\bigwedge^3 H\right)$, given by contracting ``diagonally'' with the symplecic form and composing with the multiplication map $\bigwedge^2\left(\bigwedge^2 H\right)\to\bigwedge^4 H$. For any $2\le k\le n-1$, we similarly have a composite map
$$\bigwedge^{k+2} H\otimes \bigwedge^{n-k}\left(\bigwedge^3 H\right)\to\bigwedge^{k+1} H\otimes \bigwedge^2 H\otimes\bigwedge^{n-k-1}\left(\bigwedge^3 H\right)\to\bigwedge^{k+3} H\otimes\bigwedge^{n-k-1}\left(\bigwedge^3 H\right),$$
which we denote by $\phi_k^n$. We define $\Phi_n:\bigwedge^{n}\left(\bigwedge^3 H\right)\to\bigwedge^{n+2} H$ as the composition $\phi_{n-1}^n\circ\cdots\circ \phi_1^n$.

The calculations needed to prove the formula in Lemma \ref{lemmaTNF} are simple but tedious and not very enlightening, and we will therefore leave them until the end of this section. Before proving the theorem, let us consider the following example, which illustrates the idea of the proof:

\begin{example}\label{exampleCF1}
Let $\sigma_n\in H_n(\mathcal{I}_{n+2}^1)$ denote the abelian cycle determined by the bounding pairs in Figure \ref{figSigmaN}(a). With an appropriate choice of symplectic basis, it follows from Lemma \ref{lemmaTNF} that
$$(\Phi_n\circ\psi_n)(\sigma_n)=\lambda_n a_1\wedge b_1\wedge a_3\wedge\cdots\wedge a_{n+2},$$
for some nonzero integer $\lambda_n$. As in the proof of Theorem \ref{theorem1}, we can now act by $T_{1,2}-\mathbf{1}$, which gives us
$$\lambda_na_1\wedge a_2\wedge\cdots\wedge a_{n+2}.$$
From Corollary \ref{Corollary:Main}, we can now draw the conclusion that $V_{1^{n+2}}$ lies in the image, since this is the top weight representation of $\bigwedge^{n+2} H$. 

Similarly, let $\rho_n\in H_n(\mathcal{I}_{n+1}^1)$ denote the cycle determined by bounding pairs as in Figure \ref{figSigmaN}(b). Let $\Psi_n$ denote the composition of $\Phi_n$ with the contraction map $\bigwedge^{n+2} H\to\bigwedge^n H$. If we once again order the basis appropriately, we see that
$$(\Psi_n\circ\psi_n)(\rho_n)=\lambda_n a_1\wedge a_2\wedge\cdots \wedge a_n,$$
and hence $V_{1^n}$ lies in the image. We can thus draw the conclusion that if $g\ge n+2$, then $V_{1^{n+2}}\oplus V_{1^n}$ are contained in the image of $\psi_n$, while if $g\ge n$, $V_{1^n}$ is contained in the image. 

With $\lambda$ and $\mu$ as in Theorem \ref{theorem2}, we see that for $i=1,\ldots,m$, $\sigma_{\lambda_i}$ and $\rho_{\mu_i}$ each gives us a copy of $V_{1^{\lambda_i+2}}=V_{1^{\mu_i}}$ in $\psi_{\lambda_i}(H_{\lambda_i}(\mathcal{I}_{\lambda_i+2}^1))$ and $\psi_{\mu_i}(H_{\mu_i}(\mathcal{I}_{\mu_i+1}^1))$, respectively. The weight of these irreducibles is at least three. Meanwhile, $\rho_{\mu_{m+1}}$ and $\rho_{\mu_{m+2}}$ give us copies of $V_{1^{\mu_{m+1}}}$ and $V_{1^{\mu_{m+2}}}$ in $\psi_{\mu_{m+1}}(H_{\mu_{m+1}}(\mathcal{I}_{\mu_{m+1}+1}^1))$ and $\psi_{\mu_{m+2}}(H_{\mu_{m+2}}(\mathcal{I}_{\mu_{m+2}+1}^1))$ respectively, and here the weight may be lower.

The idea for Theorem \ref{theorem2} is therefore to consider the product $\sigma_{\lambda_1}^{k_1}\rho_{\mu_1}^{l_1}\cdots\sigma_{\lambda_{m}}^{k_{m}}\rho_{\mu_{m}}^{l_{m}}\rho_{\mu_{m+1}}^{l_{m+1}}\rho_{\mu_{m+2}}^{l_{m+1}}\cdot\mathbf{1}_{g-n-2k-l}$. This is what gives us the condition that 
$$g\ge \sum_{i=1}^{m+2}( k_i(\lambda_i+2)+l_i(\mu_i+1))=|\lambda|+|\mu|+2k+l=n+2k+l.$$

\end{example}

\subsection{Proof of the theorem} Using this we may now prove the theorem:

\begin{proof}[Proof of Theorem \ref{theorem2}, assuming Lemma \ref{lemmaTNF}] As stated previously we will use the abelian cycle
$$\sigma_{\lambda_1}^{k_1}\rho_{\mu_1}^{l_1}\cdots\sigma_{\lambda_{m}}^{k_{m}}\rho_{\mu_{m}}^{l_{m}}\rho_{\mu_{m+1}}^{l_{m+1}}\rho_{\mu_{m+2}}^{l_{m+1}}\cdot\mathbf{1}_{g-n-2k-l},$$
where $\sigma_{\lambda_i}$ and $\rho_{\mu_i}$ are defined as in Example \ref{exampleCF1}. For $1\le i\le m+2$ and $1\le p\le k_i$, we let $H_{\sigma_{\lambda_i}}^{(p)}$ denote the first homology of the subsurface corresponding to the $p$th $\sigma_{\lambda_i}$-factor, with $H_{\sigma_{m+1}}^{(p)}$ and $H_{\sigma_{m+2}}^{(p)}$ defined as 0 for all $p$. We define $H_{\rho_{\mu_i}}^{(q)}$ similarly, so that
$$H=\bigoplus_{i=1}^{m+2}\left(\left(\bigoplus_{p=1}^{k_i} H_{\sigma_{\lambda_i}}^{(p)}\right)\oplus\left(\bigoplus_{q=1}^{l_i} H_{\rho_{\mu_i}}^{(q)}\right)\right)\oplus H_1(S_{g-n-2k-l}).$$
By Proposition \ref{tauformula} we have 
$$\psi_{n}(\sigma)\in \bigotimes_{i=1}^{m+2}\left(\left(\bigotimes_{p=1}^{k_i}\bigwedge^{\lambda_i}\bigwedge^3 H_{\sigma_{\lambda_i}}^{(p)}\right)\otimes\left(\bigotimes_{q=1}^{l_i}\bigwedge^{\mu_i}\bigwedge^3 H_{\rho_{\mu_i}}^{(q)}\right)\right)\subseteq \bigwedge^{n}\bigwedge^3 H$$
Let $F:=\Phi_{\lambda_1}^{\wedge k_1}\wedge\Psi_{\mu_1}^{\wedge l_1}\wedge\cdots\wedge\Phi_{\lambda_{m}}^{\wedge k_{m}}\wedge\Psi_{\mu_{m}}^{\wedge l_{m}}\wedge\Psi_{\mu_{m+1}}^{\wedge l_{m+1}}\wedge \Psi_{\mu_{m+2}}^{\wedge l_{m+2}}$. Since the direct summands $H_{\sigma_{\lambda_i}}^{(p)}$ and $H_{\rho_{\mu_i}}^{(q)}$ of $H$ are pairwise orthogonal with respect to the symplectic form $\omega$, it follows from the definitions of $\Phi_{\lambda_i}$ and $\Psi_{\lambda_i}$ that
$$F(\psi_n(\sigma))\in\bigotimes_{i=1}^{m+2}\left(\left(\bigotimes_{p=1}^{k_i}\bigwedge^{\mu_i} H_{\sigma_{\lambda_i}}^{(p)}\right)\otimes\left(\bigotimes_{q=1}^{l_i}\bigwedge^{\mu_i}H_{\rho_{\mu_i}}^{(q)}\right)\right)\subseteq\bigotimes_{i=1}^{m+2}\bigwedge^{k_i+k_i}\left(\bigwedge^{\mu_{i}} H\right) $$
Since the top weight irreducibles of $\bigotimes_{i=1}^{m+2}\bigwedge^{k_i+l_i}\left(\bigwedge^{\mu_{i}} H\right)$ are precisely the top weight irreducibles of $\bigotimes_{i=1}^{m+2}\bigwedge^{k_i+l_i} V_{1^{\mu_i}}$, we now want to apply Corollary \ref{Corollary:Main}. The representation $\bigotimes_{i=1}^{m+2}\bigwedge^{k_i+l_i}\left(\bigwedge^{\mu_{i}} H\right)$ is a direct summand of $H^{\otimes (n+2k)}$. Let us denote the projection map by $\pi$. If we choose a symplectic basis for each $H_{\sigma_{\lambda_i}}^{(p)}$ and each $H_{\sigma_{\mu_i}}^{(q)}$, extend the union of these to a symplectic basis $\{a_1,b_1,\ldots,a_g,b_g\}$ for $H$ and order this basis appropriately, we have
$$\pi(a_1\otimes\cdots\otimes a_{n+2k})\in\bigotimes_{i=1}^{m+2}\left(\left(\bigotimes_{p=1}^{k_i}\bigwedge^{\mu_i} H_{\sigma_{\lambda_i}}^{(p)}\right)\otimes\left(\bigotimes_{q=1}^{l_i}\bigwedge^{\mu_i}H_{\rho_{\mu_i}}^{(q)}\right)\right)\subseteq\bigotimes_{i=1}^{m+2}\bigwedge^{k_i+l_i}\left(\bigwedge^{\mu_{i}} H\right).$$
From what we saw in Example \ref{exampleCF1}, there is an element 
$$T\in\mathbb{Q}\left[\left(\prod_{i=1}^{m+2}\left(\prod_{p=1}^{k_i}\mathrm{Sp}(H_{\sigma_{\lambda_i}}^{(p)})\right)\times\left(\prod_{q=1}^{l_i}\mathrm{Sp}(H_{\rho_{\mu_i}}^{(q)})\right)\right)\times \mathrm{Sp}(H_1(S_{g-n-2k-l}))\right]\subseteq \mathbb{Q}[\mathrm{Sp}(H)]$$
such that
\begin{align*}
    T\cdot(F(\psi_n(\sigma)))=\pi (a_1\otimes\cdots\otimes a_{n+2k})&\qedhere
\end{align*}
\end{proof}

\subsection{Proof of Lemma \ref{lemmaTNF}} Now let us prove the lemma.

\begin{proof}[Proof of Lemma \ref{lemmaTNF}]

We prove the formula (\ref{eqCFformula}) by induction on $n$. For $n=1$ we have $\psi_1=\tau$ and $\Phi_1=\mathrm{Id}_{\bigwedge^3 H}$, so the identity holds with $\lambda_1=1$, by Proposition \ref{tauformula}.

Now suppose $n\ge 2$ and that $f_1,\ldots,f_n$ are bounding pair maps that correspond to truly nested bounding pairs $(\alpha_1,\beta_1),\ldots,(\alpha_n,\beta_n)$. We may pick a symplectic basis $\{a_1,b_1,\ldots,a_g,b_g\}$ such that $[\alpha_k]=a_{i_k}$ for $1\le k\le n$, where $i_1\le \cdots\le i_n$ and such that $a_1\wedge b_1+\cdots+a_{i_k-1}\wedge b_{i_k-1}$ descends to a symplectic form of the homology of the surface not containing the original boundary component that we get by cutting along $(\alpha_k,\beta_k)$. For $n=2$, a simple calculation shows that
\begin{align*}
    \Phi_2\circ\psi_2(A(f_1,f_2))&=\phi_{1}^2\Big(\big((a_1\wedge b_1+\cdots+a_{i_1-1}\wedge b_{i_1-1})\wedge a_{i_1}\big)\wedge\big((a_1\wedge b_1+\cdots+a_{i_2-1}\wedge b_{i_2-1})\wedge a_{i_2}\big)\Big)\\
    &=-3(a_1\wedge b_1+\cdots+a_{i_1-1}\wedge b_{i_1-1})\wedge a_{i_1}\wedge a_{i_2}.
\end{align*}
Now we let $n\ge 3$ and assume that 
$$\Phi_{n-1}\circ\psi_{n-1}(A(f_1,\ldots,\hat{f_i},\ldots ,f_{n}))=\lambda_{n-1}(a_1\wedge b_1+\cdots+a_{i_1-1}\wedge b_{i_1-1})\wedge a_{i_1}\wedge\cdots\wedge\hat{a}_{i_k}\wedge \cdots\wedge a_{i_n}$$
and
$$\Phi_{n-1}\circ\psi_{n-1}(A(f_2,\ldots,f_{n}))=\lambda_{n-1}(a_1\wedge b_1+\cdots+a_{i_2-1}\wedge b_{i_2-1})\wedge a_{i_2}\wedge \cdots\wedge a_{i_n},$$
for some non-zero $\lambda_n\in\mathbb{Z}$. Since
$$\Phi_n=\phi_{n-1}^n\circ(\Phi_{n-1}\wedge\mathrm{Id}_{\bigwedge^3 H})$$
and 
$$\psi_n(A(f_1,\ldots,f_n))=(-1)^{n-k}\psi_{n-1}(A(f_1,\ldots,\hat{f}_k,\ldots,f_n)\wedge \tau([f_k])$$
for each $k=1,\ldots,n$, it follows that
\begin{gather*}
    \Phi_{n}\circ\psi_n(A(f_1,\ldots,f_n))=\\
    \lambda_{n-1}\cdot \phi_{n-1}^n\bigg((-1)^{n-1}(a_1\wedge b_1+\cdots+a_{i_2-1}\wedge b_{i_2-1})\wedge a_{i_2}\wedge \cdots\wedge a_{i_n}\otimes (a_1\wedge b_1+\cdots+a_{i_1-1}\wedge b_{i_1-1})\wedge a_{i_1}\\
    +\sum_{k=2}^n (-1)^{n-k}(a_1\wedge b_1+\cdots+a_{i_1-1}\wedge b_{i_1-1})\wedge a_{i_1}\wedge\cdots\wedge\hat{a}_{i_k}\wedge \cdots\wedge a_{i_n}\otimes (a_1\wedge b_1+\cdots+a_{i_k-1}\wedge b_{i_k-1})\wedge a_{i_k}\bigg).
\end{gather*}
A simple calculation shows that
$$\phi_{n-1}^n\bigg((a_1\wedge b_1+a_{i_2-1}\wedge b_{i_2-1})\wedge a_{i_2}\wedge \cdots\wedge a_{i_n}\otimes(a_1\wedge b_1+\cdots+a_{i_1-1}\wedge b_{i_1-1})\wedge a_{i_1}\bigg)$$
$$=3(-1)^{n}(a_1\wedge b_1+\cdots+a_{i_1-1}\wedge b_{i_1-1})\wedge a_{i_1}\wedge\cdots\wedge a_{i_n}$$
and similarly
$$\phi_{n-1}^n\bigg((a_1\wedge b_1+\cdots+a_{i_1-1}\wedge b_{i_1-1})\wedge a_{i_1}\wedge\cdots\wedge\hat{a}_{i_k}\wedge \cdots\wedge a_{i_n}\otimes (a_1\wedge b_1+\cdots+a_{i_k-1}\wedge b_{i_k-1})\wedge a_{i_k}\bigg)$$
$$=(-1)^{n-k-1}(k+1)(a_1\wedge b_1+\cdots+a_{i_1-1}\wedge b_{i_1-1})\wedge a_{i_1}\wedge\cdots\wedge a_{i_n}.$$
Thus we have
\begin{align*}
    \Phi_n\circ\psi_n(A(f_1,\ldots,f_n))&=    -\lambda_{n-1}\sum_{k=0}^{n}(k+1)(a_1\wedge b_1+\cdots+a_{i_1-1}\wedge b_{i_1-1})\wedge a_{i_1}\wedge\cdots\wedge a_{i_n}\\
    &=-\lambda_{n-1}\frac{(n+1)(n+2)}{2}\tau_n(A(f_1,\ldots,f_n)),
\end{align*}
which proves the formula. \end{proof}

\begin{remark}
The idea to consider truly nested families of bounding pairs is taken from \cite{Church-Farb-Torelli}. There the authors consider a family of $\mathrm{Sp}(H_{\mathbb{Z}})$-equivariant maps $\tau_n:H_n(\mathcal{I}_{g,1})\to\bigwedge^{n+2} H$ that generalize the map $\tau:H_1(\mathcal{I}_{g,1})\to\bigwedge^3 H$ to higher degree. We can note that $\tau_n$ and $\Phi_n\circ\psi_n$ have the same codomain. In \cite[Section 3]{Church-Farb-Torelli}, the image of $\tau_n$ on abelian cycles determined by truly nested families of bounding pair maps is proven to satisfy
$$\tau_n(A(f_1,\ldots,f_n))=\omega_0\wedge c_1\wedge\cdots\wedge c_n,$$
with the notation being the same as in our Lemma \ref{lemmaTNF}. We can thus see that on such abelian cycles, the two maps agree, up to multiplication by a non-zero scalar. An interesting question is if these maps agree on all of $H_n(\mathcal{I}_{g,1})$, so that the image of $\tau_n$ is simply a subrepresentation of the image of $\psi_n$, or if there is some other way to explicitly relate $\psi_n$ and $\tau_n$.
\end{remark}

\section{Proof of Theorem \ref{theorem3} }\label{secondproofsection}

As we have seen, abelian cycles determined by disjoint bounding pair maps are a useful and simple tool for computing big parts of the image of $\psi_n$. In this section we will give an explicit limitation on the images of such cycles. More specifically, we will prove the following theorem:

\theoremthree*

We will prove the following two claims of the theorem separately:

\begin{claim}\label{claim1}
    The image $\psi_n(A_n(\mathcal{I}_{g,1}))$ is concentrated in weights $n,n+2,\ldots,3n$.
\end{claim}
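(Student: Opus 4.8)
The plan is to show that the image under $\psi_n$ of any abelian cycle determined by pairwise disjoint bounding pair maps lies in the span of irreducible subrepresentations of weights $n, n+2, \ldots, 3n$, and that these are the only possibilities. Recall from the discussion following Lemma~\ref{lemmaComult} that if $A(f_1,\ldots,f_n)$ is such an abelian cycle, then
\[
\psi_n(A(f_1,\ldots,f_n)) = \tau([f_1]) \wedge \cdots \wedge \tau([f_n]) \in \bigwedge^n\left(\bigwedge^3 H\right),
\]
so the first step is to understand the $\mathrm{Sp}(H)$-subrepresentation of $\bigwedge^3 H$ generated by a single $\tau([f_i])$. By Proposition~\ref{tauformula}, each $\tau([f_i])$ has the form $\omega_2 \wedge c_i$ where $\omega_2$ is a partial symplectic form and $c_i$ a primitive homology class; in general this is not traceless, but it decomposes under the $\mathrm{Sp}(H)$-equivariant splitting $\bigwedge^3 H \cong V_{1^3} \oplus V_1$, and crucially each of these summands has \emph{odd} weight ($3$ and $1$ respectively).

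The key observation is that $\bigwedge^n(\bigwedge^3 H)$ decomposes as a direct sum over subsets: writing $\bigwedge^3 H \cong W_3 \oplus W_1$ with $W_3 \cong V_{1^3}$ and $W_1 \cong V_1$, we get
\[
\bigwedge^n\left(\bigwedge^3 H\right) \cong \bigoplus_{j=0}^{n} \bigwedge^j W_3 \otimes \bigwedge^{n-j} W_1,
\]
and the image $\psi_n(A(f_1,\ldots,f_n))$, being a wedge of elements each lying in $W_3 \oplus W_1$, lands in this decomposition with each wedge factor contributing either a weight-$3$ or a weight-$1$ piece. Now I would argue that $\bigwedge^j W_3 \otimes \bigwedge^{n-j} W_1 \subseteq H^{\otimes(3j + (n-j))} = H^{\otimes(n+2j)}$ contains only irreducible subrepresentations of weight at most $n+2j$, and — this is the point requiring a little care — of weight \emph{congruent to $n+2j$ modulo $2$}, hence congruent to $n$ modulo $2$. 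More precisely, $\bigwedge^j W_3$ has weights $3j, 3j-2, \ldots$ and $\bigwedge^{n-j}W_1$ has weights $n-j, n-j-2, \ldots$, so in the tensor product all weights appearing are $\equiv 3j + (n-j) = n + 2j \equiv n \pmod 2$. Taking $j$ from $0$ to $n$, the weights range over $\{n, n+2, \ldots, 3n\}$ and nothing else appears. Since $A_n(\mathcal{I}_{g,1})$ is spanned by such abelian cycles and $\psi_n$ is linear, the claim follows.

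The main obstacle I anticipate is pinning down the parity/weight statement for the summands $\bigwedge^j W_3 \otimes \bigwedge^{n-j} W_1$: one needs that an irreducible $V_\lambda$ occurring in $H^{\otimes m}$ satisfies $|\lambda| \equiv m \pmod 2$, which follows from the fact that the contraction maps $C^m_{i,j}$ lower tensor degree by exactly $2$ and that traceless tensors of degree $m$ carry only weight-$m$ irreducibles, so every irreducible in $H^{\otimes m}$ has weight $m, m-2, m-4, \ldots$. Applying this with $m = n+2j$ and combining over the decomposition of $\bigwedge^n(\bigwedge^3 H)$ gives exactly the asserted weights. An alternative, perhaps cleaner, route is to observe directly that $\bigwedge^j V_{1^3} \otimes \bigwedge^{n-j} V_1$ is a subrepresentation of $(V_{1^3})^{\otimes j} \otimes H^{\otimes(n-j)}$, and that $V_{1^3} \subseteq \bigwedge^3 H \subseteq H^{\otimes 3}$ carries only weight-$3$ irreducibles (it is itself irreducible of weight $3$), so the whole tensor product sits inside $H^{\otimes(3j + n - j)}$ and we invoke the parity statement there; I would go with whichever of these is most economical given the Schur--Weyl material already set up in Section~\ref{SchurWeyl}.
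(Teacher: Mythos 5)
There is a genuine gap here: your argument establishes only the parity constraint and the upper bound $3n$, both of which hold for \emph{every} element of $\bigwedge^n\left(\bigwedge^3 H\right)$ and therefore cannot capture the actual content of the claim, which is the lower bound $n$. The decomposition $\bigwedge^n\left(\bigwedge^3 H\right)\cong\bigoplus_j\bigwedge^j W_3\otimes\bigwedge^{n-j}W_1$ is just a decomposition of the whole ambient representation, and the observation that each $\tau([f_i])$ lies in $W_3\oplus W_1$ is vacuous since $W_3\oplus W_1$ is all of $\bigwedge^3 H$. Moreover, the summands $\bigwedge^j W_3\otimes\bigwedge^{n-j}W_1$ do contain irreducibles of weight strictly less than $n$: already $\bigwedge^{n}W_1=\bigwedge^n H$ contains $V_{1^{n-2}},V_{1^{n-4}},\ldots$, and $\bigwedge^2 V_{1^3}$ contains a trivial summand because $V_{1^3}$ is self-dual. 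A wedge of elements each of which individually has components only in weights $3$ and $1$ can perfectly well have a nonzero component in weight $n-2$ or lower --- indeed the paper's own class $\psi_2([S_g])$ lands in the trivial (weight $0$) summand of $\bigwedge^2\left(\bigwedge^3 H\right)$, so the weights you need to exclude genuinely occur in the ambient space and your argument never uses anything that would exclude them.

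The missing ingredient is the \emph{disjointness} of the bounding pairs, which your proposal never invokes beyond the formula $\psi_n(A(f_1,\ldots,f_n))=\tau([f_1])\wedge\cdots\wedge\tau([f_n])$. The paper's route (Lemma \ref{lemmaLagrangian}) is to cut along all the bounding pairs simultaneously and produce a single Lagrangian $L\subset H$ with $\tau([f_k])\in\mathrm{Im}\left(\bigwedge^2 L\otimes H\to\bigwedge^3 H\right)$ for every $k$; this uses Proposition \ref{tauformula} together with the fact that the classes $[\alpha_k]$ span a common isotropic subspace. The image of the abelian cycle in $H^{\otimes 3n}$ then lies, up to permutation of factors, in $L^{\otimes 2n}\otimes H^{\otimes n}$, and since $L$ is isotropic every contraction must consume at least one of the $n$ unconstrained $H$-factors, so any composition of more than $n$ contractions vanishes and no irreducible of weight below $n$ can appear. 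You would need to add an argument of this kind (or some other mechanism that sees the disjointness) for the proof to go through.
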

\begin{claim}\label{claim2}
For $n\ge 2$ and $g\gg 0$, the image of $\psi_n(H_n(\mathcal{I}_{g,1}))$ contains an irreducible subrepresentation of weight $n-2$.
\end{claim}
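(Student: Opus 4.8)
The plan is to prove the two claims separately, as the paper already announces. For Claim \ref{claim1}, I would argue as follows. Any abelian cycle $A(f_1,\ldots,f_n)$ determined by pairwise disjoint bounding pair maps satisfies $\psi_n(A(f_1,\ldots,f_n)) = \tau([f_1])\wedge\cdots\wedge\tau([f_n])$ by Lemma \ref{lemmaComult} and the remark following it. By Proposition \ref{tauformula}, each $\tau([f_i])$ lies in $\bigwedge^3 H \cong V_{1^3}\oplus V_1$, so it decomposes as a sum of a weight-$3$ part and a weight-$1$ part. Hence $\psi_n(A(f_1,\ldots,f_n))$ lies in the sum, over all subsets $S\subseteq\{1,\ldots,n\}$, of the images of $\bigotimes_{i\in S} V_{1^3}\otimes\bigotimes_{i\notin S} V_1$ inside $\bigwedge^n(\bigwedge^3 H)$. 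Each such tensor product contains only irreducible subrepresentations of weight $\le 3|S| + (n-|S|) = n + 2|S|$, and since $H^{\otimes(n+2|S|)}$ has irreducibles only of weight congruent to $n+2|S| \pmod 2$, in fact all irreducible constituents have weight of the form $n + 2j$. As $|S|$ ranges over $0,\ldots,n$ this gives exactly the weights $n, n+2,\ldots,3n$. Since $A_n(\mathcal{I}_{g,1})$ is spanned by such abelian cycles, $\psi_n(A_n(\mathcal{I}_{g,1}))$ is concentrated in those weights.

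For Claim \ref{claim2}, the strategy is to exhibit a genuine homology class in $H_n(\mathcal{I}_{g,1})$ whose image under $\psi_n$ has a nonzero component of weight $n-2$. The natural building block is a class in $H_2(\mathcal{I}_{g,1})$ whose image under $\psi_2$ has a component in a weight-$0$ irreducible (i.e.\ in the trivial representation, or more precisely in the span of $\omega\wedge\omega$-type tensors inside $\bigwedge^2(\bigwedge^3 H)$). Here is where the marked point is used rather than a boundary component: one can find two commuting mapping classes in $\mathcal{I}_{g,1}$ — for instance two bounding pair maps whose associated homology classes are symplectically dual, so that the wedge $\tau([f_1])\wedge\tau([f_2])$ contracts nontrivially against the symplectic form — giving $\psi_2$ of the corresponding abelian cycle a component of weight $\le 2$. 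Wait: an abelian cycle of disjoint bounding pairs always has disjoint (hence symplectically orthogonal) supports, so its image is concentrated in weights $\ge n$ by Claim \ref{claim1}; this forces us to use a class that is \emph{not} of this form. Instead I would take a genus-$1$ (or genus-$2$) subsurface with a marked point and use the fact, going back to Johnson and exploited by Kupers--Randal-Williams, that $H_*(\mathcal{I}_{1,1})$ or the relevant low-genus Torelli homology contains classes mapping to low-weight constituents; then multiply (via the stabilization/gluing maps of Section \ref{firstproofsection}, suitably adapted to the pointed setting) by an $(n-2)$-fold product of the class $\rho_1\in H_1$ to land in $H_n(\mathcal{I}_{g,1})$. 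The product formula for $\psi_n$ (comultiplication is an algebra map) shows the image picks up a weight $(n-2)+0 = n-2$ component from the low-genus factor tensored with the weight-$(n-2)$ part of the $\rho_1^{\,n-2}$ factor, and one checks this component is nonzero by an explicit symplectic-basis computation as in Example \ref{exampleCF1}.

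The conclusion of the theorem is then immediate: by Claim \ref{claim1}, $\psi_n(A_n(\mathcal{I}_{g,1}))$ has no weight-$(n-2)$ part, whereas by Claim \ref{claim2}, $\psi_n(H_n(\mathcal{I}_{g,1}))$ does; hence $\psi_n(A_n(\mathcal{I}_{g,1})) \subsetneq \psi_n(H_n(\mathcal{I}_{g,1}))$, and so $A_n(\mathcal{I}_{g,1}) \subsetneq H_n(\mathcal{I}_{g,1})$.

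The main obstacle will be Claim \ref{claim2}: producing an \emph{explicit} homology class that is not built from disjoint bounding pairs and verifying that its $\psi_n$-image has a nonzero low-weight component. This requires either a concrete cycle in low-genus pointed Torelli homology (and a careful computation of $\tau$-type invariants on it, keeping track of the marked point's role) or an appeal to the structural results of Kupers--Randal-Williams identifying a weight-$(n-2)$ constituent in $\mathrm{Im}(\psi_n)$ — the references in the excerpt suggest the latter is available at least for $n=3$, weight $1$, and the general case presumably follows by the product construction above. Getting the nonvanishing of the relevant component, rather than merely its potential presence, is the delicate point.
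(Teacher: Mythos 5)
Your overall shape for Claim \ref{claim2} --- a degree-two class of weight $0$ combined with an $(n-2)$-fold product of $\rho_1$'s --- matches the paper's strategy, but the proposal is missing the one idea that makes it work: an actual construction of the weight-$0$ class in $H_2(\mathcal{I}_{g,1})$. The paper takes the universal surface bundle $S_g\to\mathcal{T}_{g,1}\xrightarrow{\pi}\mathcal{T}_g$ over Torelli space and defines the class as the Gysin image $\pi^!([*])$ of the point class, i.e.\ the fundamental class $[S_g]$ of a fibre. Its coproduct is the symplectic form $\omega\in\bigwedge^2 H$, and since the fibre inclusion induces the point-pushing map $\pi_1(S_g)\to\mathcal{I}_{g,1}$, for which $\tau([c])=\omega\wedge c$, one gets that $\psi_2([S_g])$ is the image of $\omega$ under inserting $\omega$ into each factor --- a weight-$0$ tensor. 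None of your candidate sources for this class survives scrutiny: you yourself retract the ``symplectically dual bounding pairs'' idea (correctly --- Lemma \ref{lemmaLagrangian} rules it out), and $\mathcal{I}_{1,1}$ is the trivial group (the mapping class group of the once-marked torus is $\mathrm{SL}(2,\mathbb{Z})$, which acts faithfully on $H_1$), so there are no low-weight classes to import from there. An unspecified appeal to Kupers--Randal-Williams is also not a substitute: their weight-$1$ class lives in degree $3$ and is what the paper cites for the single case $n=3$, not a source for all $n$.

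Two further points. First, the ``multiplication'' of Section \ref{firstproofsection} is a gluing of surfaces with boundary and does not apply to $[S_g]$, which involves the marked point sweeping out the whole surface; the paper instead combines $[S_g]$ with $\rho_1^{n-2}$ via the projection formula for $\pi^!$, i.e.\ it considers $\pi^!(\rho_1^{n-2})\in H_n(\mathcal{T}_{g,1})$ and tracks its image through a diagram ending in $\bigwedge^{n-2}U\otimes\bigwedge^2\bigwedge^3 H$, where $U=\left(\bigwedge^3 H\right)/H$. Second, this product argument genuinely fails at $n=3$: one would need a weight-$1$ component in $\bigwedge^{1}U\cong V_{1^3}$, which has none, so $n=3$ must be treated separately (this is exactly where Kupers--Randal-Williams is invoked). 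Finally, the nonvanishing you defer as ``the delicate point'' is not optional and is carried out in the paper by an explicit computation: projecting $\psi_{n-2}(\rho_1^{n-2})$ onto traceless tensors and contracting $m=n-2$ times yields the multiple $(2m-2)^m/(g-1)^m$ of $a_{m+1}\wedge\cdots\wedge a_{2m}$, which is nonzero precisely because $m\ge 2$. As written, your plan does not contain the ingredients needed to perform this verification.
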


To prove Claim \ref{claim1}, we will need to find a description of the tensor $\psi_n(A(f_1,\ldots,f_n))$ that we can work with, for any $A(f_1,\ldots,f_n)\in A_n(\mathcal{I}_{g,1})$. This will be done using the following lemma:

\begin{lemma}\label{lemmaLagrangian}
    Let $A(f_1,\ldots, f_n)\in A_n(\mathcal{I}_{g,1})$. Then there exists a Lagrangian subspace $L\subset H$, such that for $k=1,\ldots,n$, we have
    $$\tau([f_k])\in\mathrm{Im}\left(\bigwedge^2 L\otimes H\to \bigwedge^3 H\right).$$
\end{lemma}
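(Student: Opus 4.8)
I want to show that for an abelian cycle $A(f_1,\dots,f_n)$ determined by pairwise disjoint bounding pair maps, all the classes $\tau([f_k])$ lie in the image of $\bigwedge^2 L \otimes H \to \bigwedge^3 H$ for a single, common Lagrangian $L \subset H$. The plan is to exploit the geometry: the $n$ bounding pairs are pairwise disjoint simple closed curves, so together with some auxiliary curves they form part of a collection of disjoint curves on $S_{g,1}$, and disjoint simple closed curves can be completed to a pants decomposition. Any set of disjoint, non-separating simple closed curves whose homology classes are linearly independent spans an isotropic subspace of $H$ (their algebraic intersection numbers all vanish because they are geometrically disjoint), hence is contained in a Lagrangian.

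**The key steps.** First, recall from Proposition \ref{tauformula} that for a single bounding pair map $f_k$ with bounding pair $(\alpha_k,\beta_k)$, we have $\tau([f_k]) = \omega_k \wedge [\alpha_k]$, where $\omega_k = \sum_i a_i \wedge b_i$ is the symplectic form of a maximal one-boundary-component subsurface of the complementary piece not containing the boundary. Second, I would isolate the relevant algebraic data: each $\tau([f_k])$ is supported on the subsurface $\Sigma_k \subset S_{g,1}$ cut out by $(\alpha_k,\beta_k)$ on the side away from the boundary component. Since the bounding pairs are pairwise disjoint, one can choose these subsurfaces to be nested or disjoint in a controlled way, and in particular all the curves $\alpha_k$ (equivalently $\beta_k$) together with a choice of ``$a$-curves'' $\{a_i\}$ appearing in the various $\omega_k$ can be taken to be a collection of pairwise disjoint simple closed curves. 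Third, this disjoint collection has pairwise vanishing algebraic intersection numbers, so the span of their homology classes is an isotropic subspace of $H$; extend it to a Lagrangian $L$. Then each $\omega_k$ is a sum of terms $a_i \wedge b_i$ where $a_i \in L$, so $\omega_k \in \mathrm{Im}(\bigwedge^2 L \otimes H \to \bigwedge^2 H)$ — more precisely $\omega_k \wedge [\alpha_k]$, with $[\alpha_k] \in L$ as well, lands in $\mathrm{Im}(\bigwedge^2 L \otimes H \to \bigwedge^3 H)$. Actually I'd want to be slightly careful: I want $\tau([f_k]) = \omega_k \wedge [\alpha_k]$ to be in $\bigwedge^2 L \cdot H$, and since both factors of each $a_i\wedge b_i$ need not be in $L$, I should instead observe that I only need \emph{two} of the three slots of $\tau([f_k])$ to be fillable from $L$: writing $\omega_k \wedge [\alpha_k] = \sum_i (a_i \wedge b_i \wedge [\alpha_k])$ and noting $a_i, [\alpha_k] \in L$, each summand is an element of $\bigwedge^2 L$ wedged with $b_i \in H$.

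**Constructing the common Lagrangian.** The one subtle point is that $L$ must work simultaneously for all $k$. This is where disjointness of the bounding pairs is essential rather than incidental: because all $2n$ curves $\alpha_1,\beta_1,\dots,\alpha_n,\beta_n$ are pairwise disjoint, and the subsurfaces realizing the various $\omega_k$ can be chosen compatibly (cutting along all the bounding pairs at once and picking, in each complementary region, a standard basis), I can assemble a single collection $\mathcal{C}$ of pairwise disjoint simple closed curves containing all the $\alpha_k$ and all the $a_i$-curves that appear across all the $\omega_k$. The homology classes of curves in $\mathcal{C}$ span an isotropic subspace $L_0$; any maximal isotropic $L \supseteq L_0$ is the desired Lagrangian. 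I should double-check that the $a_i$-curves in distinct $\omega_k$'s can indeed be chosen disjoint — this follows because the subsurfaces can be nested, so an $\omega_k$ for a ``deeper'' bounding pair just extends the data of a ``shallower'' one.

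**Main obstacle.** The real work is the geometric bookkeeping in the previous paragraph: verifying that one can choose the subsurfaces $\Sigma_k$ and their symplectic bases coherently so that \emph{all} the relevant curves are simultaneously disjoint, and hence that a single Lagrangian suffices. Once that combinatorial-topological fact is pinned down, the algebra is immediate from Proposition \ref{tauformula}. I expect the cleanest way to organize this is to cut $S_{g,1}$ along all $n$ bounding pairs simultaneously, observe the complementary pieces are arranged in a forest-like pattern (by disjointness), and build the basis piece by piece from the boundary outward, at each stage only adding curves disjoint from everything chosen so far; the classes $a_i$ so produced, together with the $[\alpha_k]$, span the required isotropic subspace.
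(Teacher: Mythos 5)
Your proposal is correct and follows essentially the same route as the paper: cut along all $n$ bounding pairs simultaneously, choose disjoint curve systems in the complementary pieces whose classes together with the $[\alpha_k]$ span an isotropic (hence Lagrangian, after the genus count or after extending to a maximal isotropic) subspace $L$, and then observe via Proposition \ref{tauformula} that each $\tau([f_k])=\sum_i a_i\wedge b_i\wedge[\alpha_k]$ needs only the two slots $a_i,[\alpha_k]\in L$. The ``geometric bookkeeping'' you flag as the main obstacle is handled in the paper exactly as in your final paragraph, by sewing disks into the cut pieces and picking the Lagrangian-spanning curves there so that they automatically avoid all bounding pairs.
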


\begin{proof}

Let us denote the bounding pair corresponding to $f_k$ by $(\alpha_k,\beta_k)$, for $k=1,\ldots, n$. Note that since the bounding pairs are pairwise disjoint, the span of $\{[\alpha_1],\ldots,[\alpha_n]\}$ is an isotropic subspace of $H$, which we may denote by $I$. Recalling Proposition \ref{tauformula}, we want to extend this to a Lagrangian subspace $L$ in such a way that after the surface is cut by $(\alpha_k,\beta_k)$, for any $k$, $L$ restricts to a Langrangian subspace of the first homologies of both of the closed surfaces obtained by filling in the boundary components with open disks.

Now let us cut the surface along all the bounding pairs and sew in open disks in all resulting boundary components, in order to get a $n+1$ closed surfaces. For each such closed surface we may pick curves whose homology classes span a Lagrangian subspace of its first homology. These curves can be chosen so that they do not intersect the sewn-in disks or their boundaries and thus, viewed as curves in $S_g$ they do not intersect any of our bounding pairs. This means that taking the span of the homology classes of these curves and all bounding pairs will still be an isotropic subspace of $H$. Let us denote this subspace by $L$. To see that $L$ is Lagrangian, it suffices to note that the total genus of the subsurfaces we get by cutting along all bounding pairs is $g-\dim I$, which means that this is also the dimension of the subspace of $H$ spanned by the chosen curves. Since their classes are linearly independent of the classes in $I$, it follows that $\dim L=g$.

By the construction of $L$ through a selection of pairwise non-intersecting curves, it is clear that if the surface is cut along one bounding pair, the remaining curves all end up in only one of the connected components and are still pairwise non-intersecting. Since the genera of the resulting two subsurfaces add to $g-1$, it hence follows that the span of the homology classes of the curves in either boundary component still form Lagrangian subspaces in their respective components, and hence $L$ has the sought after property. 

This means that when the surface is cut along the bounding pair $(\alpha_k,\beta_k)$, for any $k=1,\ldots,n$, we can find a symplectic basis for the first homology of the closed surface corresponding to the connected component not containing the original boundary component, such that its symplectic forms can be written 
$$\omega_1=\sum_{i=1}^{g_1} a_i\wedge b_i,$$
with $a_i\in L$. It follows by Proposition \ref{tauformula}, that
$$\tau([f_k])=\omega_1\wedge [\alpha_k]$$
which lies in $\mathrm{Im}\left(\bigwedge^2 L\otimes H\to \bigwedge^3 H\right)$, since $[\alpha_k]\in L$. \end{proof}

Using this we may now prove the claim:
\begin{proof}[Proofof Claim \ref{claim1}]

By Lemma \ref{lemmaLagrangian}, we can find a Lagrangian $L\in H$ such that 
$$\psi_n(A(f_1,\ldots,f_n))\in\mathrm{Im}\left(\bigwedge^n\left(\bigwedge^2 L\otimes H\right)\to\bigwedge^n\left(\bigwedge^3 H\right)\right),$$
This means that if we apply the standard inclusion $\iota:\bigwedge^n\left(\bigwedge^3 H\right)\hookrightarrow H^{\otimes 3n}$, each term of $\psi_n(A(f_1,\ldots,f_n))$ lies in the subspace
$$L^{\otimes{2n}}\otimes H^{\otimes n}.$$
up to a permutation of the factors. From this it follows that $\iota(\psi_n(A(f_1,\ldots,f_n)))$ must be mapped to zero by any composition of more than $n$ contractions and thus it cannot have a component in an irreducible subrepresentation of weight less than $n$. \end{proof}

\begin{remark}\label{remSakasai}
We have stated Claim \ref{claim1} in the pointed case, but it holds in the unpointed case as well. To see this, let $U:=\left(\bigwedge^3 H\right)/H$, $A_n(\mathcal{I}_{g})\subseteq H_n(\mathcal{I}_g)$ be defined similarly as $A_n(\mathcal{I}_{g,1})$ and let $\psi_n':H_n(\mathcal{I}_g)\to\bigwedge^n U$ be defined similarly as $\psi_n$. We then have the commutative diagram
\[\begin{tikzcd}
A_n(\mathcal{I}_{g,1})\arrow[r,hook]\arrow[d,two heads]&H_n(\mathcal{I}_{g,1})\arrow[r,"\psi_n"]\arrow[d]&\bigwedge^n\bigwedge^3 H\arrow[d,two heads]\\
A_n(\mathcal{I}_{g})\arrow[r,hook]&H_n(\mathcal{I}_{g})\arrow[r,"\psi_n'"]&\bigwedge^n U
\end{tikzcd}\]
which proves that $\psi_n'(A_n(\mathcal{I}_{g}))$ cannot contain any irreducibles of weight less than $n$. In \cite{Sakasai-Torelli}, Sakasai uses\footnote{He also uses one abelian cycle of a different kind, but whose image under $\psi_3'$ can easily be generated using elements of $A_3(\mathcal{I}_{g})$ as it is concentrated in top weight} elements of $A_3(\mathcal{I}_{g})$ to determine the stable image of (the dual map of) $\psi_3'$, but his result leaves open whether or not the image contains a copy of $V_1$. Claim \ref{claim1} illustrates why this could not be determined using Sakasai's method.
\end{remark}

Now it only remains to prove Claim \ref{claim2}. Before proving this we need some additional background. Let $\mathcal{T}_g$ denote \textit{Torelli space}, the moduli space of Riemann surfaces of genus $g$ endowed with a homology marking, i.e. a choice of basis for the first homology group. This is the quotient of the Teichmüller space $\mathrm{Teich}_g$ by the free action of $\mathcal{I}_g$, which means that $H_*(\mathcal{I}_g)\cong H_*(\mathcal{T}_g)$. In a similar way we can construct the Torelli space $\mathcal{T}_{g,1}$ of pointed Riemann surfaces. The map given by forgetting the marked point, which we will denote by $\pi$, makes
$$S_g\to\mathcal{T}_{g,1}\overset{\pi}{\to}\mathcal{T}_{g}$$
into a universal $S_g$-bundle over $\mathcal{T}_g$. We will use this to prove the lemma.

\begin{proof}[Proof of Claim \ref{claim2}.]
First, we consider the case $n=2$. Let $[*]\in H_0(\mathcal{T}_g)$ be the class given by choosing a point $*\hookrightarrow \mathcal{T}_g$. Pulling this map back along the universal bundle, we get the trivial bundle
\[\begin{tikzcd}
S_g\arrow[d]\arrow[r]&\mathcal{T}_{g,1}\arrow[d,"\pi"]\\
*\arrow[r]&\mathcal{T}_g
\end{tikzcd}\]
which by integration over the fibers gives us a class $\pi^!([*])$ in $H_2(S_g)$, which is just the fundamental class $[S_g]$. By abuse of notation, we will use $[S_g]$ to denote the corresponding class $\pi^!([*])\in H_2(\mathcal{T}_{g,1})\cong H_2(\mathcal{I}_{g,1})$ as well. 

\setlength{\abovedisplayskip}{5pt}
\setlength{\belowdisplayskip}{5pt}

To compute $\psi_2([S_g])$, we may compute its coproduct in $H_2(S_g)$ and then use functoriality. The coproduct can be computed using the diagram
\[\begin{tikzcd}
H_2(S_g)\arrow[r]\arrow[d]& H_2(Z)\arrow[d]\\
\bigwedge^2 H_1(S_g)\arrow[r]&\bigwedge^2 H_1(Z),
\end{tikzcd}\]
where $Z$ is the wedge of tori in Figure \ref{figQuotientSurface}, that we get from $S_g$ by collapsing a subsurface of genus $0$ with $g$ boundary components to a point. 
\begin{figure}[htbp]
  \centering
  \includegraphics[scale=0.4]{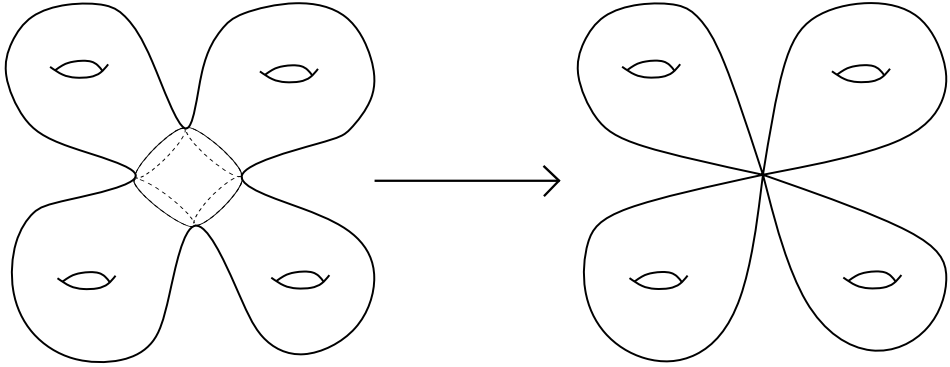}
  \caption{The quotient surface obtained from $S_g$ by collapsing a subsurface of genus $0$ with $g$ boundary components to a point.}
  \label{figQuotientSurface}
\end{figure}
The horizontal arrow on the second row of the diagram is an isomorphism, so we only need to compute the coproduct of the fundamental class $[Z]$, which is equal to the sum $[Z_1]+\cdots+[Z_g]\in H_2(Z)\cong H_2(Z_1)\oplus\cdots\oplus H_2(Z_g)$ of the fundamental classes of the $g$ tori. Since the homology of each torus is a bialgebra, we get $\Delta([Z_k])=a_k\wedge b_k$, where $\{a_k,b_k\}$ is a symplectic basis for $H_1(Z_k)$. The union of these bases correspond in a natural way to a symplectic basis $\{a_1,b_1,\ldots,a_g,b_g\}$ of $H_1(S_g)$, so we get that $\Delta([S_g])=\omega$, where $\omega$ is the symplectic form of $H_1(S_g)$.

The map $S_g\to \mathcal{T}_{g,1}$ induces the so called ``point-pushing'' map $\pi_1(S_g)\to\mathcal{I}_{g,1}$ \cite[Chapters 4.2, 6.4.2]{FarbMargalitPrimer} and using this it follows that if $c\in H_1(S_g)$ and we denote its image by $[c]\in H_1(\mathcal{I}_{g,1})$, then $\tau([c])=\omega\wedge c\in \bigwedge^3 H$. Thus it follows by functoriality that $\psi_2([S_g])$ is the image of $\omega$ by the map $\bigwedge^2 H\to \bigwedge^2\left(\bigwedge^3 H\right)$, given by inserting the symplectic form in each factor. The class $\omega\in \bigwedge^2 H$ generates the trivial representation, which means that $\psi_2([S_g])$ has weight 0. This proves the lemma in the case $n=2$.

The case $n=3$ has already been treated by Kupers and Randal-Williams \cite[Section 8]{Kupers-Torelli}, who have proven that for $g\gg 0$ the image of the dual of the map $\psi_3:H_3(\mathcal{I}_{g}^1)\to\bigwedge^3\left(\bigwedge^3 H\right)$ contains $V_1$, which implies that $V_1$ is also contained in the image of $\psi_3:H_3(\mathcal{I}_{g,1})$, by the factorization $\mathcal{I}_{g}^1\to\mathcal{I}_{g,1}\to\bigwedge^3 H_\mathbb{Z}$.

To prove the result for $n\ge 4$, we will use the result for $n=2$. Let us suppose that $n\ge 4$ and $g\ge 2n-4$. If we once again let $U:=\left(\bigwedge^3 H\right)/H$, we have a commutative diagram
\[\begin{tikzcd}
H_n(\mathcal{T}_{g,1})\arrow[r]&\bigwedge^{n-2} H_1(\mathcal{T}_{g,1})\otimes H_2(\mathcal{T}_{g,1})\arrow[r]\arrow[d,"\pi_*\otimes\mathrm{id}"]&\bigwedge^n H_1(\mathcal{T}_{g,1})\arrow[r,"\cong"]\arrow[d]&\bigwedge^n\bigwedge^3 H\arrow[d,two heads]\\
&\bigwedge^{n-2} H_1(\mathcal{T}_g)\otimes H_2(\mathcal{T}_{g,1})\arrow[r]&\bigwedge^{n-2} H_1(\mathcal{T}_g)\otimes\bigwedge^2 H_1(\mathcal{T}_{g,1})\arrow[r,"\cong"]&\bigwedge^{n-2} U\otimes \bigwedge^2\bigwedge^3 H\\
H_{n-2}(\mathcal{T}_g)\arrow[uu,"\pi^!"]\arrow[r]&\bigwedge^{n-2}H_1(\mathcal{T}_{g})\otimes H_0(\mathcal{T}_g)\arrow[u,"\mathrm{id}\otimes \pi^!"']
\end{tikzcd}\]
where the leftmost square commutes by the projection formula and the middle square commutes by functoriality of homology.

The composition of the arrows in the upper row is $\psi_n$. Since the rightmost vertical arrow is surjective, and since we now know that the image of the composition $\psi_2\circ\pi^!:H_0(\mathcal{T}_g)\to\bigwedge^2\bigwedge^3 H$ lies in weight zero, it now suffices to find a class in $H_{n-2}(\mathcal{T}_g)$ whose image in $\bigwedge^{n-2} U$ has a non-zero component in weight $n-2$. Note that this is impossible for $n=3$, since $H_1(\mathcal{T}_g)\cong U\cong V_{1^3}$, which is why this case had to be treated separately.

For $m=n-2\ge 2$, let us consider the image of the cycle $\rho_1^m$, with $\rho_1$ defined as in Example \ref{exampleCF1}, under the map $H_m(\mathcal{I}_{g}^1)\to H_m(\mathcal{I}_{g,1})$. We also get a corresponding abelian cycle in $H_m(\mathcal{I}_g)$ by forgetting the marked point. Abusing notation, we will use $\rho_1^m$ to denote this class in either of these homology groups. Let $\psi_m':H_m(\mathcal{I}_g)\to\bigwedge^3 U$ be defined as in Remark \ref{remSakasai}. Then we have a commutative diagram
\[\begin{tikzcd}
H_m(\mathcal{I}_{g,1})\arrow[r,"\psi_m"]\arrow[d]&\bigwedge^m\bigwedge^3 H\arrow[d]\\
H_m(\mathcal{I}_g)\arrow[r,"\psi_m'"]&\bigwedge^m U
\end{tikzcd}\]
which means that we may work with the image of $\psi_m(\rho_1^m)$ in $\bigwedge^m U$. The map $\bigwedge^3 H\to U$ can be described explicitly, by identifying $U$ with the image of the map $p:\bigwedge^3 H\to\bigwedge^3 H$ defined by
$$p(x\wedge y\wedge z)=x\wedge y\wedge z -\frac{1}{g-1}C_3(x\wedge y\wedge z)\wedge\omega,$$
where $C_3:\bigwedge^3 H\to H$ is the contraction map. Note that this image is precisely the subrepresentation of traceless tensors in $\bigwedge^3 H$. With a suitable choice of basis of $H$, we get 
$$(p^{\wedge m}\circ\psi_m)(\rho_1^m)=(a_1\wedge b_1\wedge a_{m+1}-\frac{1}{g-1}\omega\wedge a_{m+1})\wedge\cdots\wedge (a_{m}\wedge b_m\wedge a_{2m}-\frac{1}{g-1}\omega\wedge a_{2m})$$
We want to show that this has a nonzero component in an irreducible subrepresentation of weight $m$, so we apply the map $\bigwedge^m\left(\bigwedge^3 H\right)\to\bigwedge^{3m} H$ and then contract $m$ times. If we let $[m]=\{1,\ldots,n\}$, the image in $\bigwedge^{3m} H$ can be written
\begin{align*}
   \left(\sum_{k=0}^m\frac{(-1)^{k}}{(g-1)^k}\sum_{\{i_1,\ldots,i_k\}\subset [m]}\omega^{\wedge k}\wedge a_1\wedge b_1\wedge \cdots\widehat{a_{i_1}\wedge b_{i_1}}\wedge\cdots\wedge\widehat{a_{i_k}\wedge b_{i_k}}\wedge\cdots a_m\wedge b_m\right)\wedge a_{m+1}\wedge\cdots\wedge a_{2m}
\end{align*}
By contracting each term $m$ times, we get
\begin{align*}
    &\frac{1}{(g-1)^m}\sum_{k=0}^m(-1)^k\binom{m}{k}(g-1)^{m-k}(g-2m+1)^k a_{m+1}\wedge\cdots\wedge a_{2m}=\frac{(2m-2)^m}{(g-1)^m}a_{m+1}\wedge\cdots\wedge a_{2m},
\end{align*}
which is nonzero since $m=n-2\ge 2$ and which contracts to zero and thus lies in $V_{1^m}\subset\bigwedge^m H$. This proves the lemma for $n\ge 4$.\end{proof}

\begin{remark}
The proof of Claim \ref{claim2} is heavily inspired by \cite[Section 4]{Church-Farb-Torelli}, where the authors compute the image of $\tau_n\circ\pi^!$ for truly nested families of bounding pair maps. In our situation, it would be reasonable to expect the formula
$$\psi_{n}(\pi^!\pi_*A(f_1,\ldots,f_{n-2}))=\tau([f_1])\wedge\cdots\wedge\tau([f_{n-2}])\wedge\psi_2([S_g])$$
to hold for any abelian cycle $A(f_1,\ldots,f_{n-2})\in H_{n-2}(\mathcal{I}_{g,1})$, which would prove Claim \ref{claim2} for $n\ge 3$ directly from the case $n=2$. We have, however, not been able to verify or disprove this.
\end{remark}

\printbibliography

\end{document}